\title[Necklaces and multisets]{A bijection between necklaces and  multisets  with divisible subset  sum}
\author{Swee Hong Chan}
 \address{Department of Mathematics, Cornell University, Ithaca, NY 14853.}
\email{\url{sweehong@math.cornell.edu}}
\urladdr{https://www.math.cornell.edu/~sc2637/} 
\newtheorem{theorem}{Theorem}[section]
\newtheorem{lemma}[theorem]{Lemma}
\theoremstyle{definition}
\theoremstyle{definition}
\theoremstyle{definition}
\newenvironment{example}
  {\pushQED{\qed}\examplex}
  {\popQED\endexamplex}
\newenvironment{definition}
  {\pushQED{\qed}\definitionx}
  {\popQED\enddefinitionx}
\newcommand{\lcm}{\text{lcm}}
\newcommand{\Nc}{\mathcal N} 
\newcommand{\Ec}{\mathcal E} 
\newcommand{\Fc}{\mathcal F} 
\newcommand{\Qc}{\mathcal Q} 
\newcommand{\F}{\mathbb{F}} 
\newcommand{\Z}{\mathbb{Z}} 
\begin{document}

\begin{abstract}
Consider these two distinct combinatorial objects:
(1) the  necklaces  of length $n$ with at most $q$ colors,
and  (2) the multisets of integers modulo $n$
with subset sum divisible by $n$
and with 
the  multiplicity of each element being strictly less than $q$.
We show that these two objects have the same cardinality if $q$ and $n$ are mutually coprime.
Additionally, when  $q$ is a prime power, we construct a bijection between these two objects by viewing  necklaces as  cyclic polynomials over the finite field of size $q$.
Specializing to $q=2$ 
answers a  bijective problem posed by Richard  Stanley (Enumerative Combinatorics Vol. 1 Chapter 1, Problem 105(b)).
\end{abstract}

\keywords{Necklaces, subset sums, cyclic polynomials}

\subjclass[2010]{05A19, 05E99} 

\maketitle

%

\section{Introduction}\label{section: intro}
Let $q$ be and $n$ be two coprime positive integers.
The main characters of this paper are the following two combinatorial objects:
\begin{itemize}
\item The set  $\Nc$  of  necklaces (i.e., equivalent up to cyclic rotations) of length $n$ for which the color of each bead is drawn from a color set of size $q$.

\item The set $\Fc$ of functions $f:\Z_n \to \{0,1,\ldots,q-1\}$ for which their (linearly) weighted sum is divisible by $n$, i.e., 
\[\Fc:=  \left \{ f \ \bigg | \ \sum_{z \in \Z_n} z\, f(z) = 0 \quad \left( \text{mod } n \right) \right \}, \]
where  $\Z_n$ denotes the ring of integers modulo $n$.
\end{itemize}
Equivalently, $\Fc$ is the set of multisets of $\Z_n$ with   subset sum  divisible by $n$ and with  the   multiplicity of each element being at most $q-1$. 
The set $\Fc$ for the case $q=2$ has been studied in  different areas of mathematics,
such as coding theory~\cite{SY72}, number theory~\cite{OS78}, and  toric arrangements~\cite{ACH15,AC17}.

It was known  that $\Nc$ and $\Fc$ have the same cardinality when $q=2$ (see  \cite[Problem 105(b) Chapter 1]{Stan12}).
We extend this result to all values of $q$.

\begin{theorem}\label{theorem: combinatorial}
Let $q$ and $n$ be two coprime positive integers.
Then 
\[ |\Nc|=|\Fc|=\sum_{I \subseteq \{1,\ldots,m\}} \frac{\gcd(n, \gcd(s_i)_{i \in I})}{n}   \, \prod_{i \in I} (q^{\ell_i}-1),  \]
where $m$, $s_i$, and $\ell_i$ are as in Definition~\ref{definition: S_i}.
\end{theorem}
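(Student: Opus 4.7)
The plan is to compute $|\Nc|$ and $|\Fc|$ independently and verify that each equals the right-hand side, using the $q$-cyclotomic cosets of $\Z_n$ (the orbits of multiplication by $q$) as the common organizing device. Write these cosets as $C_1, \ldots, C_m$ with chosen representatives $s_i$ and sizes $\ell_i$; the number-theoretic input provided by $\gcd(q,n)=1$ is that $\ell_i$ equals the multiplicative order of $q$ modulo $n/\gcd(n,s_i)$, so in particular $n/\gcd(n,s_i)$ divides $q^{\ell_i}-1$.

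For $|\Nc|$, start from Burnside's formula $|\Nc|=\tfrac{1}{n}\sum_{d\mid n}\varphi(n/d)\,q^{d}$. The order-$d$ subgroup of $\Z_n$ is a union of cosets $\bigcup_{i \in A_d} C_i$, so $q^d = \prod_{i \in A_d} q^{\ell_i} = \sum_{J \subseteq A_d}\prod_{i \in J}(q^{\ell_i}-1)$. Swapping the order of summation and noting that $J \subseteq A_d$ is equivalent to $n/d$ dividing $D_J := \gcd(n, \gcd_{i \in J} s_i)$, the inner sum reduces to $\sum_{e \mid D_J}\varphi(e) = D_J$, yielding the right-hand side exactly.

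For $|\Fc|$, apply base-$q$ expansion on each coset: writing $C_i = \{s_i, q s_i, \ldots, q^{\ell_i-1} s_i\}$ modulo $n$ and setting $t_i = \sum_{j=0}^{\ell_i-1} q^j f(q^j s_i) \in [0, q^{\ell_i})$ bijects restrictions $f|_{C_i}$ with integers $t_i$, under which $C_i$'s contribution to $\sum_z z\, f(z) \pmod n$ becomes $s_i t_i$. Thus $|\Fc|$ counts tuples $(t_1, \ldots, t_m) \in \prod_i [0, q^{\ell_i})$ with $\sum_i s_i t_i \equiv 0 \pmod{n}$. Partitioning by the support $I = \{i : t_i \neq 0\}$ and invoking the equidistribution claim below, the number of solutions with support $I$ is exactly $\tfrac{D_I}{n}\prod_{i \in I}(q^{\ell_i}-1)$, matching the formula.

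The technical heart of the $|\Fc|$ computation is the equidistribution claim: while $n/\gcd(n, s_i) \mid q^{\ell_i}-1$ immediately yields equidistribution of $t_i \mapsto s_i t_i \bmod n$ over the cyclic subgroup $\langle s_i \rangle \subseteq \Z_n$ as $t_i$ ranges over $[0, q^{\ell_i})$, for a general $I$ one needs the joint map $(t_i)_{i \in I} \mapsto \sum_i s_i t_i \bmod n$ to hit each element of its image (a subgroup of order $n/D_I$) the same number of times. I expect this to follow from a coordinatewise reduction modulo $n/\gcd(n, s_i)$ together with a CRT-style product decomposition across the subgroups $\langle s_i \rangle$, but the bookkeeping --- especially checking that the restriction $t_i \neq 0$ does not disturb the count --- is the main obstacle of the argument.
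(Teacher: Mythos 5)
Your strategy is sound and it is genuinely different from the paper's. The paper proves the identity directly only when $q$ is a prime power: there $|\Nc_I|$ is computed by viewing necklaces inside $\F_q[X]/(X^n-1)$, applying the Chinese remainder theorem over the irreducible factors $P_i$ of $X^n-1$ and counting cosets of the cyclic group generated by $X$ (Lemma~\ref{lemma: size of N_I}), while $|\Fc_I|$ is computed by the same base-$q$ encoding you use (Lemma~\ref{lemma: size of F_I}); the general coprime case is then obtained by showing that $|\Nc|$, $|\Fc|$ and the right-hand side are all polynomials in $x$ when $q=xn+r$ with $n,r$ fixed, and interpolating over the infinitely many prime values $xn+r$ supplied by Dirichlet's theorem (Theorem~\ref{theorem: Bezout Dirichlet}). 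Your Burnside computation of $|\Nc|$ --- fixed points of rotation by an element of order $n/d$ are colourings constant on the subgroup of order $d$, which is a union of cyclotomic cosets; expand $q^{\ell_i}=(q^{\ell_i}-1)+1$, swap sums using $J\subseteq A_d\iff (n/d)\mid D_J$, and use $\sum_{e\mid D_J}\varphi(e)=D_J$ (with $\varphi$ Euler's totient) --- is correct and valid for every $q$ coprime to $n$ at once, so it eliminates both the prime-power hypothesis and the Dirichlet interpolation. What it does not deliver is the matched refinement $|\Nc_I|=|\Fc_I|$, which is what the paper later leverages for the bijection of Theorem~\ref{theorem: bijection}; your argument only matches totals.

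The one genuine gap is the equidistribution claim you flag, and the route you sketch for it is the weak point: the subgroups $\langle s_i\rangle\subseteq\Z_n$ are not independent in general, so there is no CRT-style product decomposition of $\Z_n$ across them. But none is needed. Since $q^{\ell_i}s_i\equiv s_i \pmod n$, we have $n\mid s_i(q^{\ell_i}-1)$, so $s_it_i\bmod n$ depends only on $t_i\bmod (q^{\ell_i}-1)$; moreover $\{1,\dots,q^{\ell_i}-1\}$ is a complete residue system modulo $q^{\ell_i}-1$, which is exactly why the restriction $t_i\neq 0$ does not disturb the count. Hence the number of tuples with support $I$ and $\sum_{i\in I}s_it_i\equiv 0\pmod n$ equals the size of the kernel of the group homomorphism $\zeta_I:\prod_{i\in I}\Z_{q^{\ell_i}-1}\to\Z_n$, $(z_i)_{i\in I}\mapsto\sum_{i\in I}s_iz_i$, whose image is the subgroup $D_I\Z_n$ of order $n/D_I$ (the order of $s_i$ in $\Z_n$ divides $q^{\ell_i}-1$, so every multiple of each $s_i$ is hit); the kernel therefore has size $\tfrac{D_I}{n}\prod_{i\in I}(q^{\ell_i}-1)$, as required. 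This is precisely the map $\zeta_I$ in the paper's Lemma~\ref{lemma: size of F_I}, which partitions instead by whether $f$ takes the value $q-1$ on all of $S_i$, i.e. by $t_i\neq q^{\ell_i}-1$ rather than $t_i\neq 0$; the two partitions are interchangeable. With this short lemma supplied, your proof is complete.
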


We remark that  Theorem~\ref{theorem: combinatorial} gives a new expression 
for the cardinality of  $\Nc$ and $\Fc$.
This expression      is different from the formulas in \cite[Theorem~11]{KP93} and \cite[Section~4.2]{Kusz14}, which involve the M\"obius function and the Euler's totient function.
We also remark that  
 the condition that $n$ and $q$ are coprime  is necessary, as there are examples for which 
$|\Nc|$ is not equal to  $|\Fc|$ when $\gcd(n,q)>1$.
One such example is when $n=q=2$, which gives us  $|\Nc|=3$ and $|\Fc|=2$.

The proof of Stanley  for the case $q=2$ is not bijective in nature, 
and neither is our proof of Theorem~\ref{theorem: combinatorial}.
In \cite[Problem 105(b) Chapter 1]{Stan12}, Stanley asked for a bijective proof of Theorem~\ref{theorem: combinatorial} for the case $q=2$.
 We answer this question here by constructing  a bijection   between the two sets when $q$ is a prime power.


Our bijection  starts by viewing necklaces with $q$ colors as cyclic polynomials over the finite field $\mathbb{F}_q$.
Each necklace   can then be associated to a coset of a  finite abelian group
by taking the remainder of the division of the cylic polynomial by irreducible factors of $X^n-1$.
On the other hand, a function in $\Fc$ can be associated to an element of the same finite abelian group
by evaluating the function on the cyclotomic cosets of $\Z_n$.
It will follow from the construction  that, for any given necklace,  the corresponding coset  contains exactly one group element that is associated to a function in $\Fc$.
 We take this unique function as the image of the necklace under our bijection. 
The full definition of this bijection is given in \S\ref{subsection: proof of main theorem 2}.

\begin{theorem}\label{theorem: bijection}
Let $q$ be a prime power, and   let $n$ be a positive  integer that is coprime to $q$.
Then the map $\widehat{\psi}: \Nc \to \Fc$  in Definition~\ref{definition: bijection}  is a bijection.
\end{theorem}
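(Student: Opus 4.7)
The plan is to exploit the Chinese Remainder decomposition of $R := \F_q[X]/(X^n-1)$. Because $\gcd(n,q) = 1$, the polynomial $X^n - 1$ is separable over $\F_q$ and factors into distinct monic irreducibles $g_1(X) \cdots g_m(X)$ with $\deg g_i = \ell_i$. The CRT then yields the ring isomorphism $R \cong L_1 \times \cdots \times L_m$, where $L_i := \F_q[X]/(g_i)$; under this isomorphism, the cyclic shift on $R$ (multiplication by $X$) becomes coordinatewise multiplication by $\xi_i \in L_i$, where $\xi_i$ is a root of $g_i$ of multiplicative order $n/\gcd(n,s_i)$ and $s_i$ is the representative of the cyclotomic coset associated to $g_i$.

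In this language, a necklace $[p] \in \Nc$ is a $\langle X \rangle$-orbit in $R$, which (after passing to the image in $\prod_i L_i$) becomes a coset of the diagonal cyclic subgroup $H := \langle (\xi_1, \ldots, \xi_m) \rangle$ inside the support stratum $\prod_{i \in I} L_i^{*} \times \prod_{i \notin I} \{0\}$ for some $I \subseteq \{1,\ldots,m\}$. On the other side, Definition~\ref{definition: bijection} assigns to each $f \in \Fc$ a single element of $\prod_i L_i$ via its values on the cyclotomic cosets, and the divisibility constraint $\sum_{z \in \Z_n} z\,f(z) \equiv 0 \pmod n$ should act as a pin-down rule selecting a unique representative of the $H$-coset associated to $[p]$. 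To show $\widehat{\varphi}$ is a bijection I would (i)~verify well-definedness by confirming that the image of $\widehat{\varphi}([p])$ indeed lies in $\Fc$ and in the correct $H$-coset, and (ii)~prove injectivity. Combined with the cardinality equality $|\Nc| = |\Fc|$ from Theorem~\ref{theorem: combinatorial}, injectivity upgrades directly to bijectivity.

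The step I expect to be the main obstacle is handling the degenerate strata $I \subsetneq \{1,\ldots,m\}$ on which some $L_i$-coordinates vanish: there the stabilizer of a tuple under $H$ is strictly larger than in the generic case, and a priori the divisibility condition could cut out zero or several representatives per $H$-coset rather than exactly one. I would analyze this stratum by stratum, seeking the correction mechanism (likely encoded in how Definition~\ref{definition: bijection} uses the data $s_i$ and $\ell_i$) that makes the counts line up with the summand $\gcd(n, \gcd(s_i)_{i \in I})/n \cdot \prod_{i \in I}(q^{\ell_i}-1)$ of Theorem~\ref{theorem: combinatorial}. Matching the count term-by-term for each $I$ should then yield simultaneously the existence and uniqueness needed to finish the proof.
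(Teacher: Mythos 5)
Your framework is the same as the paper's (CRT decomposition of $\Qc$, strata indexed by the support set $I$, necklaces as orbits of the cyclic group generated by the $X$-images), but the proposal stops exactly where the actual work begins. The statement you defer as ``the main obstacle'' --- that on each stratum the divisibility condition picks out \emph{exactly one} representative of each $X$-orbit, neither zero nor several --- is not a technical refinement to be checked later; it is the content of condition \eqref{item: C2}, and without it $\widehat{\varphi}$ is not even well defined, let alone injective. So steps (i) and (ii) of your plan cannot be carried out before this point is settled, and your fallback of invoking $|\Nc|=|\Fc|$ from Theorem~\ref{theorem: combinatorial} only converts injectivity into bijectivity; it does not supply the existence-and-uniqueness of the representative, nor the injectivity itself. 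Note also that the paper does not need Theorem~\ref{theorem: combinatorial} at all here: the stratum-wise maps $\varphi_I:\Qc_I\to\Ec_I$ are bijections (Lemma~\ref{lemma: bijection}), which gives surjectivity of $\widehat{\varphi}$ directly.

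The missing mechanism is concrete, and it is not ``evaluate $f$ on the cyclotomic cosets.'' One writes each nonzero coordinate $\alpha_i\in G_i$ in discrete-logarithm coordinates with respect to a generator $g_i$ chosen so that $X_i=g_i^{(q^{\ell_i}-1)\gcd(n,s_i)/n}$ (Definition~\ref{definition: g_i}), splits $\log_{g_i}(\alpha)$ into a quotient $a_i(\alpha)$ and remainder $b_i(\alpha)$, and then --- crucially --- applies the automorphism $\phi_I$ of $\prod_{i\in I}\Z_{n/\gcd(n,s_i)}$ furnished by Lemma~\ref{lemma: Bezout variant} before encoding the result as base-$q$ digits of $f$ on $S_i$ (Definition~\ref{definition: varphi_I}). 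The point of $\phi_I$ is that multiplication by $X$ adds $(1,\ldots,1)$ to the vector $(a_i(\alpha))_{i\in I}$, and $\phi_I$ converts this into adding exactly $\gcd(n,\gcd(s_i)_{i\in I})$ to the weighted sum $\sum_{z}z f_\alpha(z)$ modulo $n$, while that weighted sum always lies in the subgroup $\gcd(n,\gcd(s_i)_{i\in I})\,\Z_n$ and the orbit has length $n/\gcd(n,\gcd(s_i)_{i\in I})$ (Lemma~\ref{lemma: function sum}). This is what forces exactly one shift per orbit to land in $\Fc$, uniformly over the degenerate strata you worry about. A naive choice (e.g.\ using the $a_i$'s directly) would change the weighted sum by $\sum_{i\in I}s_i$ per shift, which need not generate the right subgroup, so the count per orbit really can fail without the correction; since your proposal neither identifies this correction nor proves any substitute for it, it has a genuine gap at the central step.
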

See Example~\ref{example: 5} for
an example of the bijection $\widehat{\psi}$  when $q=2$ and $n=3$.
A bijection for general values of $q$  
remains an open problem.

This paper is structured as follows.
In \S\ref{s. notation}, we review algebraic tools that will be used in the proofs of the main theorems.
In \S\ref{subsection: proof of main theorem 1},
we present a proof of Theorem~\ref{theorem: combinatorial} in \S\ref{subsection: proof of main theorem 1}.
 In \S\ref{subsection: proof of main theorem 2}, we present  a proof of Theorem~\ref{theorem: bijection}.
 In \S\ref{section: unsolved questions}, we present two open  bijective problems that extend
 Theorem~\ref{theorem: bijection}.


\section{Preliminaries} \label{s. notation}
In this section, we review  algebraic tools that will be used in the proof of Theorem~\ref{theorem: combinatorial} and Theorem~\ref{theorem: bijection}.

Throughout this paper, $q$ and $n$ are two positive integers such that $\gcd(n,q)=1$.
\begin{definition}\label{definition: S_i}
Consider the equivalence relation on $\Z_n$
that takes all multiplications by $q$ as equivalent. 
 Fix integers $s_1,s_2,\ldots, s_m$ as the representatives of the equivalence classes of this relation.
The \emph{cyclotomic cosets} $S_1,\ldots, S_m$ of $\Z_n$ are 
\[  S_i:=\{s_i,q\, s_i,q^2 s_i,\ldots, q^{\ell_i-1}s_i\} \qquad (i\in \{1,\ldots,m\}), \]
where    $\ell_i$ is the smallest positive integer such that 
$q^{\ell_i}s_i = s_i \text{ (mod n)}$. 
\end{definition}


When $q$ is a prime power, we view the set of necklaces $\Nc$  from  the following algebraic perspective.
Let $\Qc$ be 
the quotient
\[  \Qc:= \frac{\F_q[X]}{(X^n-1)},\]
of the
polynomial ring over the finite field $\F_q$ of order $q$ in a single variable $X$
by the ideal generated by $X^n - 1$.
 Each element of $\Qc$ corresponds to an $n$-character string over an alphabet of size $q$ by taking its coefficient vector.
The set $\Nc$  can then be viewed as
\[ \Nc:=\left\{  \{ \alpha ,X\alpha,\ldots, X^{n-1} \alpha  \}  \  \bigg | \ \alpha \in \Qc \right \},  \]
 the set of  equivalence classes of the relation in  $\Qc$ that takes all multiplications by $X$ as equivalent.

Fix  a  primitive $n$-th root of unity  $\omega$ in the algebraic closure of  $\F_q$.
 Such $\omega$ exists because $q$ is coprime to $n$.

\begin{definition}\label{definition: P_i}
Let $q$ be a prime power.
Let $P_1,\ldots, P_m$ be the irreducible factors of $X^n-1$ over the field $\F_q$.
That is, for any $i\in \{1,\ldots,m\}$, 
\[ P_i:=\prod_{k \in S_i} (X- \omega^{k}). \]
We denote by $G_i$ the set
\[ G_i:= (\Qc/P_i \Qc)^\times, \]
 of nonzero elements of the quotient ring $\Qc/P_i \Qc$. 
\end{definition}

\begin{definition}\label{definition: alpha_i}
Let $q$ be a prime power.
For any $\alpha \in \Qc$, we denote by 
$\alpha_i:= \alpha \mod P_i$ the image of $\alpha$ in $\Qc/ P_i \Qc$ under the quotient map.
In particular, $X_i$ is the image of $X$  in $\Qc/ P_i \Qc$. 
\end{definition}

We now present examples of the   objects discussed above for the case that $q=2$ and $n=3$.
This case will be our running example throughout this paper.
\begin{example}\label{example: 0}
Let $q=2$ and $n=3$. 
We make the following choices of cyclotomic cosets from Definition~\ref{definition: S_i}:
\begin{align*}
& s_1=0, \quad S_1=\{ 0 \}; \quad \text{ and } \quad  s_2=1, \quad S_2=\{1,2\}.
\end{align*}

We represent a function $f:\Z_3 \to \{0,1\}$ 
as the set   $\{ z \in \Z_3 \mid f(z)=1 \}$.
In this notation, the  sets  $\Nc$ and $\Fc$ are  given by
\begin{align*}
\Nc=&\{ \{0\}, \{1,X,X^2 \}, \{1+X, X+X^2,1+X^2  \}, \{1+X+X^2   \}\},\\
\Fc=&\{ \varnothing,  \{0\},  \{1,2\}, \{0,1,2\} \}.
\end{align*}
The polynomials  $P_i\in \Qc$ from Definition~\ref{definition: P_i} are  given by
\[P_1=1+X; \qquad P_2=1+X+X^2. \qedhere \]
\end{example}

 
We refer to \cite{Wan03} for  the proofs of  the following properties of $\Qc/P_i \Qc$ and  $G_i$.

\begin{lemma}[{\cite[Section~9]{Wan03}}]\label{lemma: finite field}
Let $q$ be a prime power, and let $n$ be a positive integer coprime to $q$. 
For any $i \in \{1,\ldots,m\}$,
\begin{enumerate}[{
label=\textnormal{({\roman*})}, labelindent=0pt,
ref={\roman*}}]
\item \label{item: finite field 1}  $\Qc/P_i \Qc$ is a finite field of order $q^{\ell_i}$.
\item \label{item: finite field 2}  $G_i$  is a cyclic group of order $q^{\ell_i}-1$ under  multiplication.
\item \label{item: finite field 3} $X_i$ is an element of $G_i$ with multiplicative    order $\frac{n}{\gcd(n,s_i)}$.
\qed
\end{enumerate}
\end{lemma}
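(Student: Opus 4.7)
The plan is to identify $\Qc/P_i\Qc$ with the subfield $\F_q(\omega^{s_i})$ of $\overline{\F_q}$, after which all three statements drop out of standard finite-field theory.

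First I would exploit the coprimality $\gcd(n,q)=1$: this ensures $X^n-1$ is separable over $\F_q$, so it splits as $\prod_{k=0}^{n-1}(X-\omega^k)$ in $\overline{\F_q}[X]$. The Frobenius $x\mapsto x^q$ permutes these roots, and by Definition~\ref{definition: S_i} its orbits are precisely the sets $\{\omega^k : k\in S_i\}$, of respective sizes $\ell_i$. This is the algebraic content behind the combinatorial definition of cyclotomic cosets.

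Next I would argue that each $P_i$, which a priori sits only in $\overline{\F_q}[X]$, actually belongs to $\F_q[X]$ and is irreducible there. Galois-stability of the orbit under Frobenius gives the first claim, and irreducibility then follows because the roots of $P_i$ form a single Galois orbit, so $P_i$ must be the minimal polynomial of $\omega^{s_i}$ over $\F_q$. With the factorization $X^n-1 = \prod_{i=1}^m P_i$ into pairwise coprime irreducibles established, I would apply the Chinese Remainder Theorem to get $\Qc \cong \prod_i \F_q[X]/(P_i)$; projecting onto the $i$-th factor yields $\Qc/P_i\Qc \cong \F_q[X]/(P_i)$. Since $P_i$ is irreducible of degree $\ell_i$, this quotient is a field of order $q^{\ell_i}$, proving (i), and (ii) is then just the standard fact that the unit group of a finite field is cyclic.

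For (iii) I would use the evaluation isomorphism $\F_q[X]/(P_i) \to \F_q(\omega^{s_i})$ sending $X\mapsto \omega^{s_i}$, under which $X_i$ corresponds to $\omega^{s_i}$. Because $\omega$ has multiplicative order $n$, the element $\omega^{s_i}$ has order $n/\gcd(n,s_i)$, giving the claimed order for $X_i$. The only mildly subtle step in the whole argument is verifying that $P_i$ is Galois-stable and irreducible over $\F_q$; everything else is bookkeeping with the Chinese Remainder Theorem and basic properties of finite fields.
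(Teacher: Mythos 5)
Your proof is correct, and it coincides with the intended argument: the paper does not prove this lemma at all but cites \cite{Wan03}, and your route (separability of $X^n-1$ from $\gcd(n,q)=1$, Frobenius orbits on the roots matching the cyclotomic cosets, $P_i$ being the minimal polynomial of $\omega^{s_i}$ over $\F_q$, then the Chinese Remainder Theorem, cyclicity of the multiplicative group of a finite field, and $X_i\mapsto\omega^{s_i}$ giving order $\frac{n}{\gcd(n,s_i)}$) is exactly the standard one, consistent with the paper's own remark after Definition~\ref{definition: P_i} that $P_i$ is the irreducible monic polynomial with root $\omega^{s_i}$. No gaps; nothing further needed.
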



We will use the following  versions of the Chinese remainder theorem in the proof of Theorem~\ref{theorem: combinatorial} and Theorem~\ref{theorem: bijection}.

\begin{theorem}[{Chinese remainder theorem~\cite[Theorem~2.25]{Hun80}}]
\label{theorem: Chinese remainder theorem} 
 \ 
\begin{enumerate}
[label=\textnormal{({\roman*})}, labelindent=0pt,
ref={\roman*}]
\item \label{item: CRT1}
Let $n$ be a positive integer with prime factorization $n=p_1^{a_1}\ldots p_\ell^{a_\ell}$.
Then the following map is an isomorphism:
\begin{align*}
\Z/ n\Z & \to  \Z/p_1^{a_1}\Z \times \ldots \times \Z/p_\ell^{a_\ell}\Z\\
x \textnormal{ mod } n & \mapsto ( x \textnormal{ mod } p_1^{a_1},\ldots, x \textnormal{ mod } p_\ell^{a_\ell}).
\end{align*}

\item \label{item: CRT2}
Let $q$ be a prime power and let $n$ be a positive integer coprime to $q$.
Then the following map is an isomorphism:
\begin{align*}
\Qc & \to  \Qc/P_1\Qc \times \ldots \times \Qc/P_m\Qc\\
\alpha  & \mapsto ( \alpha \textnormal{ mod } P_1,\ldots, \alpha \textnormal{ mod } P_m).  \tag*{\qed}
\end{align*}
\end{enumerate}
\end{theorem}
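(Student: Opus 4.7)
My plan is to derive both statements from the general Chinese remainder theorem for commutative rings: if $I_1, \ldots, I_t$ are pairwise coprime ideals in a commutative ring $R$ (meaning $I_j + I_k = R$ whenever $j \neq k$), then the natural map $R \to \prod_j R/I_j$ is surjective with kernel $\bigcap_j I_j = \prod_j I_j$, inducing an isomorphism $R/\bigcap_j I_j \cong \prod_j R/I_j$. Part (i) follows immediately by taking $R = \Z$ and $I_j = (p_j^{a_j})$: these ideals are pairwise coprime because the primes $p_j$ are distinct, their intersection is $(n)$, and the quotient map descends to the claimed isomorphism.

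For part (ii), I would apply the same template to $R = \F_q[X]$ with ideals $(P_i)$, which requires two ingredients. First, the factorization $X^n - 1 = \prod_{i=1}^m P_i$ in $\F_q[X]$: since $\gcd(n, q) = 1$, the formal derivative $nX^{n-1}$ is coprime to $X^n - 1$, so $X^n - 1$ is separable over $\F_q$ and its $n$ distinct roots in $\overline{\F_q}$ are exactly $\omega^0, \ldots, \omega^{n-1}$. Grouping these roots by the cyclotomic cosets $S_1, \ldots, S_m$, which by construction partition $\Z_n$, yields both the factorization and the fact that each $P_i$ is the minimal polynomial of $\omega^{s_i}$ over $\F_q$ (hence irreducible). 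Second, distinct $P_i$ and $P_j$ share no root in $\overline{\F_q}$ and are therefore coprime in the principal ideal domain $\F_q[X]$, so the ideals $(P_1), \ldots, (P_m)$ are pairwise coprime and their intersection equals $(X^n - 1)$.

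Assembling these pieces, the general CRT applied in $\F_q[X]$ gives $\F_q[X]/(X^n - 1) \cong \prod_i \F_q[X]/(P_i)$, and rewriting via the correspondence theorem for quotients (using $\Qc = \F_q[X]/(X^n - 1)$) produces $\Qc \cong \prod_i \Qc/P_i\Qc$, which is precisely the claimed map. I do not anticipate any serious obstacle since both parts reduce to well-established algebraic facts; the one subtle point to emphasize is the role of the coprimality hypothesis $\gcd(n, q) = 1$, which is exactly what guarantees separability of $X^n - 1$ and hence the distinct-irreducible factorization needed to apply the CRT. Since this is a classical result, one could alternatively just cite \cite[Theorem~2.25]{Hun80} for the ring-theoretic CRT and note that part (ii) reduces to the special case described above.
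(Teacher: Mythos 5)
Your argument is correct: the paper itself offers no proof of this statement, simply citing the classical Chinese remainder theorem of \cite{Hun80} (note the \qed in the statement), and your derivation supplies exactly the standard details that citation leaves implicit --- pairwise coprimality of the ideals, separability of $X^n-1$ from $\gcd(n,q)=1$, the factorization $X^n-1=\prod_i P_i$ along cyclotomic cosets with each $P_i$ irreducible (a fact the paper also records, just after Definition~\ref{definition: P_i}), and the passage from $\F_q[X]/(P_i)$ to $\Qc/P_i\Qc$. So your proposal matches the paper's approach, only written out in full rather than cited.
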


%


The following lemma is a consequence of Theorem~\ref{theorem: Chinese remainder theorem}\eqref{item: CRT1}.
%

\begin{lemma}\label{lemma: Bezout variant}
Let $n$ and  $d_1,\ldots, d_k$ be positive integers.
Then there exists a  group automorphism $\phi: \prod_{i=1}^k\Z_{\frac{n}{\gcd(n,d_i)}} \to \prod_{i=1}^k \Z_{\frac{n}{\gcd(n,d_i)}}$ such that 
\begin{equation*}
d_1 h_1+\ldots + d_k h_k= \gcd(n,d_1,\ldots, d_k) \quad \textnormal{(mod $n$)},
\end{equation*} 
where $h_i$ is the $i$-th coordinate of $\phi(1,\ldots,1)$.
\end{lemma}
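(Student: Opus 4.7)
The plan is to split the argument into two independent steps: (i) produce a tuple $h = (h_1,\ldots,h_k) \in G := \prod_{i=1}^k \Z_{n/\gcd(n,d_i)}$ satisfying the congruence $d_1 h_1+\ldots+d_k h_k \equiv g \pmod n$, where $g := \gcd(n,d_1,\ldots,d_k)$; and (ii) show that this tuple lies in the $\textnormal{Aut}(G)$-orbit of $(1,\ldots,1)$ by exhibiting an explicit automorphism that sends $(1,\ldots,1)$ to $h$.

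Step (i) is an immediate consequence of B\'ezout's identity: write $g = c_0 n + c_1 d_1 + \ldots + c_k d_k$ for integers $c_i$, and set $h_i := c_i \bmod m_i$ with $m_i := n/\gcd(n,d_i)$. This is well-defined because $d_i m_i$ is a multiple of $n$, so $d_i h_i \equiv d_i c_i \pmod n$. A key observation supporting step (ii) is that $h$ has the maximum possible order in $G$: the exponent of $G$ equals $\textnormal{lcm}(m_1,\ldots,m_k) = n/\gcd(n,d_1,\ldots,d_k) = n/g$ by the standard lcm/gcd identity, while the homomorphism $\psi: G \to \Z_n$ defined by $\psi(x) = \sum d_i x_i$ is well-defined and sends $h$ to $g$, an element of order $n/g$ in $\Z_n$. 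This forces the order of $h$ to be at least $n/g$ and therefore exactly $n/g$.

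Step (ii) is the heart of the argument. I would use the Chinese remainder decomposition $G = \bigoplus_p G_p$ into $p$-primary components, together with the induced factorization $\textnormal{Aut}(G) = \prod_p \textnormal{Aut}(G_p)$, to reduce to the case where each $m_i$ is a power of a single prime $p$, say $m_i = p^{a_i}$. Since $h$ has maximum order in $G$, within each $p$-primary piece there exists an index $i^*$ with $a_{i^*} = \max_i a_i$ and $h_{i^*}$ coprime to $p$. I would then define
\[\phi_p(e_j) := e_j \text{ for } j \ne i^*, \qquad \phi_p(e_{i^*}) := h_{i^*}\, e_{i^*} + \sum_{i \ne i^*} (h_i - 1)\, e_i.\]
The inequality $a_{i^*} \ge a_i$ ensures that $\phi_p$ is a well-defined homomorphism between products of cyclic groups, and the invertibility of $h_{i^*}$ in $\Z_{p^{a_{i^*}}}$ makes the resulting ``almost upper-triangular'' matrix invertible by back-substitution. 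A direct computation confirms $\phi_p(1,\ldots,1) = h$ within $G_p$, and reassembling over all primes $p$ yields the desired automorphism $\phi$.

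The main obstacle is verifying well-definedness of homomorphisms between $\prod_j \Z_{m_j}$ and itself, since the matrix entries $a_{ij}$ must satisfy the divisibility constraint $m_i/\gcd(m_i, m_j) \mid a_{ij}$ rather than being arbitrary integers. The $p$-primary reduction circumvents this precisely because within a single $p$-primary component the moduli are linearly ordered by divisibility, so placing the only nontrivial column at a position of maximum modulus automatically satisfies every divisibility obligation.
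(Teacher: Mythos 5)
Your proposal is correct, and every step I checked goes through: the B\'ezout tuple $h_i = c_i \bmod m_i$ is well defined because $d_i m_i \equiv 0 \pmod n$; the map $\psi(x)=\sum_i d_i x_i$ is a well-defined homomorphism for the same reason, so $h$ indeed has order equal to the exponent $n/g$ of $G$; this forces, in each $p$-primary piece, a coordinate $i^*$ of maximal modulus with $h_{i^*}$ prime to $p$; and your single-column map $\phi_p$ is a well-defined automorphism (the divisibility obligations in column $i^*$ are vacuous because $m_i \mid m_{i^*}$, and the diagonal entry $h_{i^*}$ is a unit) with $\phi_p(1,\ldots,1)=h$. However, the organization differs from the paper's. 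The paper reduces the entire statement to prime-power $n$ via CRT at the outset, where the values $\gcd(n,d_i)$ are totally ordered; after reindexing so that $\gcd(n,d_1)$ is smallest, it defines in one stroke the triangular automorphism $\phi(e_1)=t_1e_1-\sum_{j\ge 2}e_j$, $\phi(e_i)=e_i$ for $i\ge 2$ (with $t_1d_1\equiv\gcd(n,d_1)\pmod n$), computes $\phi(1,\ldots,1)=(t_1,0,\ldots,0)$, and uses that $\gcd(n,d_1)=\gcd(n,d_1,\ldots,d_k)$ for prime-power $n$. You instead fix the target tuple first (a B\'ezout solution over general $n$) and then prove it lies in the $\mathrm{Aut}(G)$-orbit of $(1,\ldots,1)$, which requires the extra maximal-order observation but yields a mildly stronger fact (any tuple solving the congruence automatically has maximal order and is hence realizable as $\phi(1,\ldots,1)$). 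Both proofs ultimately rest on the same mechanism: an automorphism differing from the identity in a single column anchored at the coordinate of largest modulus, well defined by the divisibility criterion and invertible because its diagonal entry is a unit; the paper's choice of a target supported on one coordinate simply makes the order bookkeeping unnecessary and the argument a bit shorter.
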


\begin{proof}
%
By the Chinese remainder theorem~(Theorem~\ref{theorem: Chinese remainder theorem}\eqref{item: CRT1}),
 the group and the sum in the lemma can be decomposed into their corresponding prime parts.
Therefore,
it suffices to prove the lemma for when $n$ is a prime power $p^a$.

For any $i \in \{1,\ldots,k\}$, let $a_i$ be the integer such that $p^{a_i}= \gcd(n,d_i)$, and let
$t_i$ be an integer coprime to $n$ such that
$t_id_i = \gcd(n,d_i) $\ (mod $n$).
Note that $a_i\leq a$ by definition.
By reindexing if necessary, 
we can  without loss of generality assume that
$a_1\leq\ldots\leq a_k$.

Let $e_i$ be the group element $(\underbrace{0,\ldots,0}_{i-1},1,0,\ldots,0)$.
We define $\phi(e_i)$ to be 
\begin{align*}
\phi(e_i):=
\begin{cases}
\displaystyle t_1e_1 -\sum_{j=2}^k e_j &\text{ if } i=1;\\
 e_i & \text{if  } i\in \{2,\ldots,k\}.
 \end{cases}
\end{align*}
We claim that  $\phi$ can be extended to a group automorphism of $\prod_{i=1}^k\Z_{\frac{n}{\gcd(n,d_i)}}$.

Since $a_1$ is chosen to be the minimum value of $a_i$'s, we have
\[\frac{n}{\gcd(n,d_i)} \, \phi(e_i)=\begin{cases} \displaystyle  t_1 \left( p^{a-a_1} e_1 \right) -\sum_{j=2}^k  p^{a_j-a_1} \left(p^{a-a_j} e_j \right)= 0 & \text{if $i=1$};\\
p^{a-a_i}e_i= 0 &\text{if } i \in \{2,\ldots,k\},
\end{cases}
\]
 and so $\phi$ extends to a group homomorphism.  
 
Tha map $\phi$ is an automorphism since the  
 corresponding matrix is triangular and all the diagonal entries are coprime to $n$.
Finally, we have
\begin{align*}
\phi(1,\ldots,1)= \sum_{j=1}^k \phi(e_i) =t_1e_1- \sum_{j=2}^k e_j +  \sum_{j=2}^k e_j=t_1e_1=(t_1,0,\ldots,0),
\end{align*}
which implies that
\begin{align*}
d_1 h_1+\ldots + d_k h_k=d_1t_1=\gcd(n,d_1) \quad \text{(mod $n$)}
=& \gcd(n,d_1,\ldots, d_k) \quad \text{(mod $n$)},
\end{align*}
where the last equality is a consequence of  $a_1$ being the minimum value of $a_i$'s.
This proves the claim.
\end{proof}

We will use the following version of Dirichlet's prime number theorem  in the proof of Theorem~\ref{theorem: combinatorial}.

\begin{theorem}[{\cite[Dirichlet's prime number theorem]{JJ98}}]\label{theorem: Bezout Dirichlet}

Let $a$ and $b$ be  two coprime positive integers.
Then there are infinitely many positive integers $k$
such that 
$a+ k  b$ is a prime number. \qed
\end{theorem}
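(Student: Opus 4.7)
The plan is to give the classical analytic proof via Dirichlet $L$-functions and the orthogonality of characters. First I would introduce the group of Dirichlet characters modulo $b$, namely the completely multiplicative functions $\chi:\Z\to\mathbb{C}$ of period $b$ that vanish on integers not coprime to $b$. This group is isomorphic to $(\Z/b\Z)^\times$, and since $\gcd(a,b)=1$ the orthogonality relation gives
\[
\frac{1}{\phi(b)} \sum_{\chi} \overline{\chi(a)}\,\chi(n) \;=\; \mathbf{1}\bigl[n \equiv a \pmod{b}\bigr].
\]
To each character I would attach the Dirichlet $L$-function $L(s,\chi)=\sum_{n\ge 1}\chi(n) n^{-s}$, which converges absolutely on $\{\Re(s)>1\}$ and admits the Euler product $L(s,\chi)=\prod_p(1-\chi(p)p^{-s})^{-1}$ there.

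Next I would take the logarithm of the Euler product and average against $\overline{\chi(a)}$ to obtain, for $\Re(s)>1$,
\[
\frac{1}{\phi(b)}\sum_\chi \overline{\chi(a)}\log L(s,\chi) \;=\; \sum_{p\,\equiv\, a\,(\mathrm{mod}\,b)} p^{-s} \;+\; R(s),
\]
where $R(s)$ collects the prime-power terms with exponent $m\ge 2$ and stays uniformly bounded as $s\to 1^+$. For the principal character $\chi_0$, the factorization $L(s,\chi_0)=\zeta(s)\prod_{p\mid b}(1-p^{-s})$ yields $\log L(s,\chi_0)\sim \log\tfrac{1}{s-1}$. For every non-principal $\chi$, partial summation together with the bound $\bigl|\sum_{n\le N}\chi(n)\bigr|\le \phi(b)$ extends $L(s,\chi)$ holomorphically to $\{\Re(s)>0\}$. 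Granting that $L(1,\chi)\neq 0$ for every non-principal $\chi$, only the principal-character term contributes a divergence on the left-hand side, forcing $\sum_{p\equiv a\,(\mathrm{mod}\,b)}p^{-s}$ to diverge as $s\to 1^+$; in particular the progression $a,a+b,a+2b,\ldots$ must contain infinitely many primes.

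The main obstacle, and the heart of the argument, is the non-vanishing $L(1,\chi)\neq 0$ for every non-principal $\chi$. For complex $\chi$ I would consider $\prod_\chi L(s,\chi)$: this product has non-negative Dirichlet coefficients and a simple pole at $s=1$ from the principal-character factor, so a hypothetical zero $L(1,\chi)=0$ would pair with the zero of $L(1,\bar\chi)$ to annihilate this pole, leading to a contradiction via a Landau-type argument on Dirichlet series with non-negative coefficients. The genuinely delicate case is that of a real non-principal character, where only a simple zero would appear and cannot be cancelled in pairs; here I would use the comparison series $\zeta(s)L(s,\chi)=\sum_n a_n n^{-s}$, observe that $a_n\ge 0$ and $a_{n^2}\ge 1$ so its partial sums are at least of order $\sqrt{N}$, and derive a contradiction from a hypothetical zero at $s=1$ by expanding the resulting entire function in a Taylor series about a point such as $s=2$ and comparing coefficients against this $\sqrt{N}$ lower bound.
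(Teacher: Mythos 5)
The paper never proves this statement: it is imported verbatim from the cited reference (hence the \qed with no argument) and used purely as a black box, to guarantee infinitely many primes of the form $xn+r$ in the proof of Theorem~\ref{theorem: combinatorial}. Your proposal, by contrast, sketches the classical analytic proof, and the skeleton is correct: orthogonality of the Dirichlet characters modulo $b$ isolates the progression $a \bmod b$, the Euler product and its logarithm reduce the question to the behaviour of $\log L(s,\chi)$ as $s\to 1^+$, the principal character supplies the divergence $\log\frac{1}{s-1}$ (after discarding the bounded prime-power tail), partial summation continues the non-principal $L(s,\chi)$ past $s=1$, and everything hinges on $L(1,\chi)\neq 0$, where your treatment correctly separates the complex characters (pole-cancellation in $\prod_\chi L(s,\chi)$ plus a Landau-type argument) from the real non-principal ones (the series $\zeta(s)L(s,\chi)$ with non-negative coefficients, $a_{n^2}\ge 1$, and the Taylor expansion about $s=2$ compared with the $\sqrt{N}$ lower bound). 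This is the standard route and nothing in it would fail, but be aware of two things: first, as written it is an outline, and the real-character non-vanishing step in particular --- the part you rightly call the heart of the argument --- is only gestured at and would need several pages to make rigorous; second, all of this machinery is supplied for a result the paper deliberately does not reprove, so for the purposes of this paper a citation is the intended ``proof,'' and your analytic development, while essentially sound, is solving a much harder problem than the one the paper actually engages with.
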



%
%

\section{Proof of Theorem~\ref{theorem: combinatorial}}
\label{subsection: proof of main theorem 1}

%
%
%
%


In this section, we present a  proof of Theorem~\ref{theorem: combinatorial}, starting with the case  that $q$ is a prime power.

Let $m$ and  $S_i$  be as in Definition~\ref{definition: S_i}, and $P_i$ be as in Definition~\ref{definition: P_i}.
For any function $f:\Z_n \to \{0,1, \ldots, q-1\}$,
the  \emph{level set} $L_{q-1}(f)$ of $f$ at $q-1$ is the set  $\{z \in \Z_n \mid f(z)=q-1 \}$.
\begin{definition}\label{definition: Sc^I and Nc^I}
Let $q$ be a prime power.
For any $I \subseteq \{1,\ldots,m\}$,
the sets   $\Nc_I$ and $\Fc_I$ are given by
\begin{align*}
\Nc_I&:=\left\{  \{\alpha,X\alpha,\ldots, X^{n-1}\alpha  \} \in \Nc \  \bigg | \   P_i \text{ divides } \alpha  \text{ iff } i  \notin I\right \},\\
\Fc_I&:=\{ f \in \Fc \ \mid \   L_{q-1}(f) \cap S_i = S_i  \text{ iff } i \notin I \}.    \tag*{\qedhere} 
\end{align*} 
\end{definition}
 By definition $\{ \Nc_I \}_{I \subseteq \{1,\ldots,m\}}$ 
and $\{ \Fc_I \}_{I \subseteq \{1,\ldots,m\}}$ form a partition of $\Nc$ and $\Fc$, respectively.

\begin{example}\label{example: 1}
Continuing from Example~\ref{example: 0}, 
%
the sets $\Fc_I$ and $\Nc_I$ from Definition~\ref{definition: Sc^I and Nc^I} are given by
\begin{alignat*}{2}
&  \Nc_\varnothing=\{\{0\}\}, \qquad &&\Fc_\varnothing=\{\{0,1,2 \} \};\\
&\Nc_{\{1\}}=\{ \{ 1+X+X^2  \}\}, \qquad  &&\Fc_{\{1\}}=\{ \{1,2  \} \}; \\
&\Nc_{\{2\}}=\{ \{1+X,X+X^2,1+X^2  \}  \}, \qquad &&\Fc_{\{2\}}=\{ \{0 \} \};\\
&\Nc_{\{1,2\}}=\{   \{1,X,X^2  \}  \}, \qquad &&\Fc_{\{1,2\}}=\{ \varnothing \}. \tag*{\qedhere} 
\end{alignat*}
\end{example}

We now show that $\Nc_I$ and $\Fc_I$ have the same cardinality for any $I \subseteq \{1,\ldots,m\}$.

Let $s_i$ and $\ell_i$  be as in Definition~\ref{definition: S_i}.


\begin{lemma}\label{lemma: size of N_I}
Let $q$ be a prime power,  let $n$ be a positive integer coprime to $q$, and let $I \subseteq \{1,\ldots,m\}$.
Then 
\[  |\Nc_I|=  \frac{\gcd(n, \gcd(s_i)_{i \in I})}{n} \, \prod_{i \in I} (q^{\ell_i}-1). \]
\end{lemma}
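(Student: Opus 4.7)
The plan is to use the Chinese remainder theorem (Theorem~\ref{theorem: Chinese remainder theorem}\eqref{item: CRT2}) to parametrize the elements of $\Nc_I$ by tuples in $\prod_{i\in I}G_i$, and then count orbits under multiplication by $X$ via a stabilizer argument.

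First, I would apply the CRT isomorphism $\Qc \cong \prod_{i=1}^m \Qc/P_i\Qc$ sending $\alpha \mapsto (\alpha_1,\ldots,\alpha_m)$. By Definition~\ref{definition: Sc^I and Nc^I}, an element $\alpha \in \Qc$ gives rise to a class in $\Nc_I$ precisely when $\alpha_i = 0$ for $i \notin I$ and $\alpha_i \in G_i$ for $i \in I$. Hence the set of such $\alpha$'s is in bijection with $\prod_{i\in I}G_i$, which by Lemma~\ref{lemma: finite field}\eqref{item: finite field 1} has cardinality $\prod_{i\in I}(q^{\ell_i}-1)$. Under this bijection, multiplication by $X$ acts coordinatewise as multiplication by $X_i$ in each $G_i$.

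Second, I would determine the size of the orbit of any such $\alpha$ under the action of $\langle X\rangle \cong \Z/n\Z$. Since every $\alpha_i$ with $i\in I$ is invertible in $G_i$, the equation $X^k\alpha = \alpha$ is equivalent to $X_i^k = 1$ for every $i\in I$. By Lemma~\ref{lemma: finite field}\eqref{item: finite field 3}, $X_i$ has order $\frac{n}{\gcd(n,s_i)}$ in $G_i$, so the stabilizer of $\alpha$ in $\Z/n\Z$ is the subgroup generated by
\[ D \;:=\; \lcm\!\left(\tfrac{n}{\gcd(n,s_i)}\right)_{i\in I}, \]
which has $n/D$ elements. Every orbit therefore has size $D$, and the number of orbits is
\[ |\Nc_I| \;=\; \frac{1}{D}\prod_{i\in I}(q^{\ell_i}-1). \]

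Finally, I would simplify $D$. Using the elementary identity $\lcm(n/a,n/b) = n/\gcd(a,b)$ valid whenever $a,b\mid n$, applied inductively, I obtain $D = n/\gcd\bigl(\gcd(n,s_i)\bigr)_{i\in I} = n/\gcd(n,\gcd(s_i)_{i\in I})$. Substituting this into the formula above yields exactly the claim. No real obstacle is expected; the only bookkeeping to watch is the gcd/lcm manipulation in the last step and verifying that the $\alpha_i=0$ components impose no extra stabilizer conditions, both of which are routine.
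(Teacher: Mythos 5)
Your proof is correct and follows essentially the same route as the paper: a CRT identification of the relevant $\alpha$'s with $\prod_{i\in I}G_i$, followed by counting the orbits of multiplication by $X$ (the paper phrases this as cosets of the cyclic subgroup generated by $(X_i)_{i\in I}$, which has the same size $D$ as your uniform orbits), and the same $\lcm$/$\gcd$ simplification using Lemma~\ref{lemma: finite field}. No substantive differences.
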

\begin{proof}

Recall the definition of $G_i$ from Definition~\ref{definition: P_i} and the definition of $\alpha_i$ and $X_i$ from Definition~\ref{definition: alpha_i}.
In particular, if  $\alpha$ is an element of $\Qc$ that is not divisible by $P_i$, then $\alpha_i$ is contained in $G_i$.
Consider the map \begin{align*}
\begin{split}
\xi:  \{ \alpha \in \Qc \mid P_i \text{ divides } \alpha \text{ iff } i \notin I  \} & \to \prod_{i \in I} G_i \\
\alpha & \mapsto (\alpha_i)_{i \in I}.
\end{split}
\end{align*}
The 
map  
$\xi$ 
is a bijection by Theorem~\ref{theorem: Chinese remainder theorem}\eqref{item: CRT2}.


Denote by $C_I$  the cyclic subgroup of 
$\prod_{i \in I} G_i$ generated by $(X_i)_{i \in I}$.
Note that  
   $\Nc_I$ is in bijection with cosets  of $C_I$ in $\prod_{i \in I} G_i$ by the map $\xi$.
 Hence we have
 \begin{equation}\label{equation: Nc} |\Nc_I|= \left|  \prod_{i \in I} G_i \Big / C_I \right|=\frac{1}{|C_I|} \, \prod_{i \in I} |G_i|.
 \end{equation}
On the other hand, we also have \begin{align*}
  |G_i|&=q^{\ell_i}-1 \qquad \text{(by Lemma~\ref{lemma: finite field}\eqref{item: finite field 2})}; \\
  |C_I|&=\min\{k>0 \mid (X_i)^k \text{ is the identity element of $G_i$ for all } i \in I  \}\\
  &=\lcm\left(1, \left(\frac{n}{\gcd(n,s_i)}\right)_{i \in I} \right)  \qquad \text{(by Lemma~\ref{lemma: finite field}\eqref{item: finite field 3})}\\
  &=\frac{n}{\gcd(n,\gcd(n,s_i)_{i \in I})}.
 \end{align*}
 The conclusion of the lemma now follows from \eqref{equation: Nc}.
\end{proof}

\begin{lemma}\label{lemma: size of F_I}
Let $q$ and $n$ be two coprime positive integers,
 and let $I \subseteq \{1,\ldots,m\}$.
Then 
\[  |\Fc_I|=  \frac{\gcd(n, \gcd(s_i)_{i \in I})}{n} \, \prod_{i \in I} (q^{\ell_i}-1). \]
\end{lemma}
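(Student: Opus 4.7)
The plan is to parameterize $\Fc_I$ explicitly, reduce the divisibility condition modulo $n$ to a single linear equation, and count solutions using elementary group theory. Each $f \in \Fc_I$ has $f|_{S_i}$ forced to be the constant $q-1$ for $i \notin I$, while for $i \in I$ the restriction can be any function $S_i \to \{0, 1, \ldots, q-1\}$ other than the constant $q-1$. Splitting $\sum_{z \in \Z_n} z f(z)$ cosetwise, the contribution from each $i \notin I$ equals
\[(q-1)\, s_i\, (1 + q + \ldots + q^{\ell_i-1}) = s_i(q^{\ell_i}-1) \equiv 0 \pmod{n}\]
by the defining property $q^{\ell_i} s_i \equiv s_i \pmod{n}$ of $\ell_i$. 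So the condition $f \in \Fc$ reduces to $\sum_{i \in I} \sum_{z \in S_i} z f(z) \equiv 0 \pmod{n}$.

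Next I would encode each allowed restriction $f|_{S_i}$ (for $i \in I$) by its base-$q$ digit string $y_i := \sum_{k=0}^{\ell_i-1} q^k f(q^k s_i)$. Standard base-$q$ representation makes this a bijection between the allowed restrictions and $\{0, 1, \ldots, q^{\ell_i}-2\}$, since excluding the constant function $q-1$ removes precisely $y_i = q^{\ell_i}-1$. Under this encoding, $\sum_{z \in S_i} z f(z) = s_i y_i$, and so $|\Fc_I|$ equals the number of tuples $(y_i)_{i \in I}$ with $y_i \in \{0, 1, \ldots, q^{\ell_i}-2\}$ and $\sum_{i \in I} s_i y_i \equiv 0 \pmod{n}$.

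Finally I would perform the count. Setting $r_i := n/\gcd(n, s_i)$, the congruence $q^{\ell_i} s_i \equiv s_i \pmod{n}$ gives $r_i \mid q^{\ell_i}-1$, so $y_i \bmod r_i$ determines $s_i y_i \pmod{n}$ and each residue class $\bar y_i \in \Z_{r_i}$ has exactly $(q^{\ell_i}-1)/r_i$ lifts in $\{0, \ldots, q^{\ell_i}-2\}$. It remains to count tuples $(\bar y_i) \in \prod_{i \in I} \Z_{r_i}$ satisfying $\sum_{i \in I} s_i \bar y_i \equiv 0 \pmod{n}$, which is the kernel of a group homomorphism $\prod_{i \in I} \Z_{r_i} \to \Z_n$. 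The main obstacle is identifying the image of this homomorphism, but this is exactly what Lemma~\ref{lemma: Bezout variant} delivers: it produces an explicit preimage of $g := \gcd(n, \gcd_{i \in I} s_i)$, so the image equals $g\Z/n\Z$ and has order $n/g$. The kernel therefore has size $g \prod_{i \in I} r_i / n$, and multiplying by the per-class multiplicity yields
\[ |\Fc_I| = \Bigl(\prod_{i \in I} \frac{q^{\ell_i}-1}{r_i}\Bigr) \cdot \frac{g \prod_{i \in I} r_i}{n} = \frac{g}{n} \prod_{i \in I}(q^{\ell_i}-1), \]
as claimed.
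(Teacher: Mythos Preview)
Your argument is correct and follows the same route as the paper: encode each admissible $f|_{S_i}$ by its base-$q$ value in $\{0,\ldots,q^{\ell_i}-2\}$ and count $\Fc_I$ as the kernel of the resulting homomorphism to $\Z_n$. The only differences are cosmetic: the paper works directly with $\prod_{i\in I}\Z_{q^{\ell_i}-1}$ rather than first reducing to $\prod_{i\in I}\Z_{r_i}$, and it identifies the image as $g\Z_n$ by the elementary observation that the subgroup of $\Z_n$ generated by the $s_i$ is $\gcd(n,\gcd_{i\in I} s_i)\,\Z_n$, so your appeal to Lemma~\ref{lemma: Bezout variant} is heavier than necessary here.
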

\begin{proof}

Let $\Ec_I$ denote the set
\[\Ec_I:=\{ f: \Z_n \to \{0,1,\ldots, q-1\} \ \mid \   L_{q-1}(f) \cap S_i = S_i  \text{ iff } i \notin I \}. \]
Let $\eta_I: \Ec_I\to \prod_{i \in I} \Z_{q^{\ell_i}-1}$ be the map defined by
\begin{align*}
f &\mapsto \left( \sum_{j=0}^{\ell_i-1} q^{j} f(q^{j} s_i) \mod q^{\ell_i}-1\right)_{i \in I}.
\end{align*}
The  map $\eta_I$ is  surjective  by the definition of $\Ec_I$.



Let $f$ be any function in $\Ec_I$.
For any $i \in I$,
the sum $\sum_{j=0}^{\ell_i-1} q^{j} f(q^{j} s_i)$ 
is strictly less than $q^{\ell_i}-1$  since $L_{q-1}(f) \cap S_i \neq S_i$.
This implies  that the $i$-th coordinate of $\eta_I(f)$
 determines  $f(s_i),\ldots, f(q^{\ell_i-1}s_i)$ for any $i \in I$. 
Furthermore, we have $(f(s_i),\ldots, f(q^{\ell_i-1}s_i))= (q-1,\ldots, q-1)$
for any $i \notin I$ by the definition of $\Ec_I$.
  Therefore, we conclude that $\eta_I$ is an injective map.


Let $\zeta_I$ be the map defined by
\begin{align*}
\begin{split}
\zeta_I:\prod_{i \in I} \Z_{q^{\ell_i}-1} &\to \Z_n\\
(z_i)_{i \in I} & \mapsto \sum_{i \in I} s_iz_i \mod n.
\end{split}
\end{align*}
The map $\zeta_I$ is a well defined group homomorphism as $n$ divides $s_i(q^{\ell_i}-1)$ for all $i \in \{1,\ldots,m\}$ by Definition~\ref{definition: S_i}.
Furthermore, by the definition of $\gcd$,  
 the image of $\zeta_I$  is  $\gcd(n,\gcd(s_i)_{i \in I})\Z_n$.

Now note that, for any $f \in \Ec_I$,
\begin{align*}
\sum_{z \in \Z_n} zf(z)=& \sum_{i \in I} \sum_{j=0}^{\ell_i-1} q^{j}  s_i  f({q^{j}s_i})+ \sum_{i \notin I} (q^{\ell_i}-1)s_i \\=& \sum_{i \in I} s_i \sum_{j=0}^{\ell_i-1} q^{j}   f({q^{j}s_i}) \quad \text{(mod $n$)}\\
=&\zeta_I(\eta_I(f)).
\end{align*} 
Since $\eta_I$ is a bijection, 
it then follows from the definition of $\Fc_I$ (Definition~\ref{definition: Sc^I and Nc^I}) that  the kernel of $\zeta_I$ is equal to $\eta_I(\Fc_I)$.
 
Combining all those observations, we conclude that
\begin{align*}
|\Fc_I|=&|\eta_I(\Fc_I)|=|\ker(\zeta_I)|=
\frac{|\prod_{i \in I} \Z_{q^{\ell_i}-1}|}{|\gcd(n,\gcd(s_i)_{i \in I})\Z_n|}\\
=&\frac{\gcd(n, \gcd(s_i)_{i \in I})}{n} \, \prod_{i \in I} (q^{\ell_i}-1),
\end{align*}
as desired.
\end{proof}

We now complete the  proof of Theorem~\ref{theorem: combinatorial}.
\begin{proof}[Proof of Theorem~\ref{theorem: combinatorial}]

Fix an arbitrary positive integer $n$.
Let $r \in \{0,\ldots,n-1\}$ be such that $\gcd(n,r)=1$.
Let $x$ be a variable, and let $q=xn+r$ throughout this proof.
Note that $\gcd(n,r)=\gcd(n,q)=1$. 

Since the integers   $m$, $s_i$, and $\ell_i$  from Definition~\ref{definition: S_i}  depend only on $n$ and $r$, we have  the function 
\begin{equation}\label{equation: polynomial}
 x \mapsto \sum_{I \subseteq \{1,\ldots,m\}} \frac{\gcd(n, \gcd(s_i)_{i \in I})}{n}   \, \prod_{i \in I} ((xn+r)^{\ell_i}-1)
\end{equation}
is a polynomial of $x$.

By Lemma~\ref{lemma: size of N_I}, Lemma~\ref{lemma: size of F_I}, and the fact that $\{\Nc_I\}_{I \subseteq \{1,\ldots,m\}}$ and $\{\Fc_I\}_{I \subseteq \{1,\ldots,m\}}$ form a  partition of $\Nc$ and $\Fc$ respectively,
we have that
 $|\Nc|$ and $|\Fc|$ are equal to  the polynomial in \eqref{equation: polynomial} when $q=xn+r$ is a prime power.
Since $\gcd(n, r) = 1$, we have by Theorem~\ref{theorem: Bezout Dirichlet} that
   there are infinitely many  positive integers $x$ for which $xn+r$
 is a prime.
Hence it suffices to show that $|\Nc|$ and $|\Fc|$ are polynomials of $x$.

For any $i \in \{1,\ldots, n\}$, let $\text{col}(i)$ be the number of necklaces of length $n$ with colors chosen from $\{0,\ldots i-1\}$, and such that all $i$ colors are  used.
Then
\begin{align*}
|\Nc|=\sum_{i=1}^n \text{col}(i)\, \binom{q}{i}=\sum_{i=1}^n \frac{\text{col}(i)}{i!}\, \prod_{j=0}^{i-1}(xn+r-j).
\end{align*}
This shows that  $|\Nc|$ is a polynomial of $x$.

Let $V$ denote the set 
\[ V:=\left \{ R \in \{0,\ldots,n-1\}^{\Z_n} \ \biggl | \ \sum_{z\in \Z_n} zR_z=0 \  (\text{mod } n) \right \}. \]
We then have 
\begin{align*}
|\Fc|=& \sum_{R \in V } \{ f: \Z_n \to \{0,1, \ldots,q-1\}   \ \mid \      f(z)= R_z   \text{ (mod $n$) for all }z \in \Z_n  \}    \\
=& \sum_{R \in V } \prod_{z\in \Z_n} \left| \{  k\geq 0 \ | \  kn+R_z < q   \}\right|  \\
=&\sum_{R \in V }  
(x+1)^{|\{z\in \Z_n  \, \mid\,  R_z <r\}|} \, x^{|\{z \in \Z_n \, \mid\,  R_z \geq r\}|}.
\end{align*}
This shows that $|\Fc|$ is a polynomial of $x$.
This completes the proof.
\end{proof}

\section{Proof of Theorem~\ref{theorem: bijection}}
\label{subsection: proof of main theorem 2}

In this section, we present a proof of Theorem~\ref{theorem: bijection}.
Throughout this section,  $q$ is a prime power  and $n$ is a positive integer that is coprime to $q$.

Let  $\Qc$ be as defined in Section~\ref{s. notation}, and let
$\Ec$ be the set of all functions from $\Z_n$ to $\{0,1, \ldots, q-1\}$.
Suppose that there exists a map $\psi: \Qc \to \Ec$ 
that satisfies the following conditions:
\begin{enumerate}
[{
label=\textnormal{({C\arabic*})}, labelindent=0pt,
ref={C\arabic*}}]
\item \label{item: C1} The map $\psi$ is a bijection from $\Qc$ to $\Ec$; and
\item \label{item: C2} For any $\alpha \in \Qc$ there exists a unique 
 $\beta \in \{\alpha, X\alpha, \ldots, X^{n-1}\alpha\}$
such that 
 $\psi(\beta)$ is contained in $\Fc$.
\end{enumerate}
We could then  define the map $\widehat{\psi}: \Nc \to \Fc$  by
\[  \{\alpha, X\alpha,\ldots,  X^{n-1}\alpha\} \mapsto \psi(\beta). \]
It  would follow that $\widehat{\psi}$ is a bijection between $\Nc$ and $\Fc$,
which 
would   prove Theorem~\ref{theorem: bijection}.
In this section, we will construct a map $\psi:\Qc \to \Ec$ that satisfies \eqref{item: C1} and \eqref{item: C2}.

Recall the definition of  $m$, $s_i$, and $\ell_i$   from Definition~\ref{definition: S_i},
 the definition of $G_i$ from  
Definition~\ref{definition: P_i},
and the definition of $X_i$ from Definition~\ref{definition: alpha_i}.

Let $i \in \{1,\ldots,m\}$.
Since $G_i$ is  a cyclic group of order $q^{\ell_i}-1$ (Lemma~\ref{lemma: finite field}\eqref{item: finite field 2}) and $X_i$ is an element of $G_i$ with order $\frac{n}{\gcd(n,s_i)}$ (Lemma~\ref{lemma: finite field}\eqref{item: finite field 3}), the group $G_i$ contains a group generator such that $X_i$ is  ${\frac{(q^{\ell_i}-1)\gcd(n,s_i)}{n}}$-th power of this generator.
\begin{definition}\label{definition: g_i}
For any $i \in  \{1,\ldots,m\}$,
let $g_i$ be a group generator of  $G_i$
such that $X_i$ is the ${\frac{(q^{\ell_i}-1)\gcd(n,s_i)}{n}}$-th power of $g_i$. 
\end{definition}


Recall the definition of $P_i$ from Definition~\ref{definition: P_i} and the definition of  $\alpha_i$ from Definition~\ref{definition: alpha_i}.
\begin{definition}[Discrete logarithm]\label{definition: discrete logarithm}
Let  $i\in \{1,\ldots,m\}$, and let $\alpha$ be an element of  $\Qc$ not divisible by  $P_i$.
The \emph{discrete logarithm} $\log_{g_i}(\alpha)$ 
is  the smallest non-negative integer $k$ such that $\alpha_i=g_i^k$ in $G_i$.
\end{definition}
 By Lemma~\ref{lemma: finite field}\eqref{item: finite field 2}, the integer $\log_{g_i}(\alpha)$ is contained in $\{0,\ldots, q^{\ell_i}-2\}$. 

\begin{definition}\label{definition: a_i b_i}
Let  $i\in \{1,\ldots,m\}$, and let $\alpha$ be an element of  $\Qc$ not divisible by  $P_i$.
We denote by  $a_i(\alpha)$ and $b_i(\alpha)$ the quotient and the remainder of the division of $\log_{g_i}(\alpha)$ by $\frac{(q^{\ell_i}-1) \gcd(n,s_i)}{n}$, respectively.
\end{definition}
In particular, the nonnegative integers $a_i(\alpha)$ is strictly less than  $ \frac{n}{\gcd(n,s_i)}$  and $b_i(\alpha)$ is strictly less than  $\frac{(q^{\ell_i}-1) \gcd(n,s_i)}{n}$. 
We compute these integers for the case $n=3$ below.


\begin{example}\label{example: 2}
Continuing from Example~\ref{example: 1}, we  make the following choices of  $g_1$ and $g_2$ that satisfy the condition in Definition~\ref{definition: g_i}:
\begin{align*}
 g_1=1 \text{ mod } 1+X \quad \text{ and } \quad    g_2=X \text{ mod } 1+X+X^2.
\end{align*}
Note that $\frac{(q^{\ell_i}-1) \gcd(n,s_i)}{n}=1$ for $i \in \{1,2\}$;
we remark that this equality is special  to this example and is false for large values of $n$ and $q$.

The following is the value of $\log_{g_1}(\alpha)$,  $a_1(\alpha)$ and $b_1(\alpha)$ for different $\alpha$'s:
\begin{itemize}
\item $\text{If } \alpha=1  \ \ \text{(mod }1+X),  \text{ then }$
\[ \log_{g_1}(\alpha)=0; \qquad   a_1(\alpha)=0; \qquad  b_1(\alpha)=0.\]
\end{itemize}

The following is the value of $\log_{g_2}(\alpha)$,  $a_2(\alpha)$ and $b_2(\alpha)$ for different $\alpha$'s:

\begin{itemize}
\item $\text{If } \alpha=1  \ \ \text{(mod }1+X+X^2),  \text{ then }$
\[ \log_{g_2}(\alpha)=0; \qquad   a_2(\alpha)=0; \qquad  b_2(\alpha)=0.\]

\item $\text{If } \alpha=X  \ \ \text{(mod }1+X+X^2),  \text{ then }$
\[ \log_{g_2}(\alpha)=1; \qquad   a_2(\alpha)=1; \qquad  b_2(\alpha)=0.\]

\item $\text{If } \alpha=1+X  \ \ \text{(mod }1+X+X^2),  \text{ then }$
\[ \log_{g_2}(\alpha)=2; \qquad   a_2(\alpha)=2; \qquad  b_2(\alpha)=0. \tag*{\qedhere}\]
\end{itemize}

\end{example}

\begin{lemma}\label{lemma: a_i b_i}
Let $q$ be a prime power, let $n$ be a positive integer coprime to $q$, and let $i \in \{1,\ldots,m\}$. Then 
\begin{enumerate}[{
label=\textnormal{({\roman*})}, labelindent=0pt,
ref={\roman*}}]
\item \label{item: a_i b_i 1} $a_i(X)=1$  and $b_i(X)=0$; and  

\item \label{item: a_i b_i 2} For any  $k\geq 0$ and any $\alpha \in \Qc$,
\begin{align*}
\begin{split}
a_i( X^k \alpha) &=  k  +a_i(\alpha) \quad \left(\textnormal{mod } \frac{n}{\gcd(n,s_i)}\right); \ \text{and}\\
 b_i(X^k\alpha)&=b_i(\alpha).
\end{split}
\end{align*}
\end{enumerate}
\end{lemma}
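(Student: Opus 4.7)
The plan is to reduce both claims to the single identity $X_i = g_i^{N_i}$ provided by Definition~\ref{definition: g_i}, where I abbreviate $N_i := \frac{(q^{\ell_i}-1)\gcd(n,s_i)}{n}$ for the divisor appearing in Definition~\ref{definition: a_i b_i}, and then to invoke the homomorphism property of the discrete logarithm $\log_{g_i}\colon G_i \to \Z/(q^{\ell_i}-1)\Z$.

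For part (i), I would simply observe that Definition~\ref{definition: g_i} asserts $X_i = g_i^{N_i}$, hence $\log_{g_i}(X) = N_i$ in the admissible range $\{0,\dots,q^{\ell_i}-2\}$. Euclidean division of $N_i$ by $N_i$ has quotient $1$ and remainder $0$, which is exactly the claim. For part (ii), multiplicativity of the discrete logarithm gives
\[
\log_{g_i}(X^k \alpha) \,\equiv\, k\,\log_{g_i}(X) + \log_{g_i}(\alpha) \,\equiv\, k N_i + \log_{g_i}(\alpha) \pmod{q^{\ell_i}-1}.
\]
Substituting the canonical decomposition $\log_{g_i}(\alpha) = a_i(\alpha)\,N_i + b_i(\alpha)$ with $0 \leq b_i(\alpha) < N_i$, this rewrites as
\[
\log_{g_i}(X^k \alpha) \,\equiv\, (k + a_i(\alpha))\,N_i + b_i(\alpha) \pmod{q^{\ell_i}-1}.
\]
Since $0 \leq b_i(\alpha) < N_i$, this is already the Euclidean form modulo $q^{\ell_i}-1$, so reading off the quotient and remainder directly yields $b_i(X^k\alpha) = b_i(\alpha)$ and identifies the quotient as $k + a_i(\alpha)$.

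The only point requiring care — and where I expect the main bookkeeping to happen — is the passage from the modulus $q^{\ell_i}-1$ on the underlying discrete logarithm to the modulus $n/\gcd(n,s_i)$ on the quotient $a_i$. I would handle this via the identity $(q^{\ell_i}-1)/N_i = n/\gcd(n,s_i)$, which shows that reducing the coefficient of $N_i$ modulo $n/\gcd(n,s_i)$ matches the reduction of the full expression modulo $q^{\ell_i}-1$. Beyond this translation between moduli, the proof is entirely mechanical.
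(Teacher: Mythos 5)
Your proof is correct and takes essentially the same route as the paper: multiplicativity of the discrete logarithm modulo $q^{\ell_i}-1$, substitution of the Euclidean decomposition $\log_{g_i}(\alpha)=a_i(\alpha)\,N_i+b_i(\alpha)$ with $0\le b_i(\alpha)<N_i$, and reading off quotient and remainder after translating moduli via $(q^{\ell_i}-1)/N_i=n/\gcd(n,s_i)$. The only caveat (present equally in the paper's one-line treatment of part (i)) is the degenerate coset with $s_i\equiv 0\ (\mathrm{mod}\ n)$, where $N_i=q^{\ell_i}-1$ lies outside the admissible range, $\log_{g_i}(X)=0$, and hence $a_i(X)=0$ rather than $1$; this is harmless because there $n/\gcd(n,s_i)=1$ and $a_i$ only ever enters modulo $n/\gcd(n,s_i)$.
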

\begin{proof}
Part \eqref{item: a_i b_i 1}  follows directly from Definition~\ref{definition: g_i} and  Definition~\ref{definition: a_i b_i}.

By Definition~\ref{definition: discrete logarithm}, we  have for  any non-negative integer $k$ and any $\alpha \in \Qc$ that
\begin{align*}
 \log_{g_i}(X^k \alpha)=&\log_{g_i}(\alpha)+ k\log_{g_i}(X) \quad   (\text{mod } q^{\ell_i}-1)\\
 =& \left(k  +a_i(\alpha) \right) \frac{(q^{\ell_i}-1) \gcd(n,s_i)}{n} 
 +b_i(\alpha)  \quad \text{(mod } q^{\ell_i}-1).
\end{align*}
Part \eqref{item: a_i b_i 2} now follows from Definition~\ref{definition: a_i b_i}.
\end{proof}

\begin{definition}\label{definition: phi}
Let $I$ be a  subset of $\{1,\ldots,m\}$.
Let  $\phi_I$ be a  group automorphism of  
    $\prod_{i \in I} \Z_{\frac{n}{\gcd(n,s_i)}}$
  that satisfies
 \begin{equation}\label{equation: definition phi}
 \sum_{i \in I} s_ih_{i,I}= \gcd(n,\gcd(s_i)_{i \in I}) \quad \textnormal{(mod $n$)},
 \end{equation}
  where $h_{i,I}$ is the $i$-th coordinate of $\phi_I(1,\ldots,1)$.
  The function $\phi_I$ exists for any $I \subseteq \{1,\ldots,m\}$ by
Lemma~\ref{lemma: Bezout variant}.
\end{definition}
 We present an explicit example of the function  $\phi_I$ for the case $n=3$ below.

\begin{example}\label{example: 3}
Continuing from Example~\ref{example: 2}, we choose $\phi_I$ to be the identity map on
$\prod_{i \in I} \Z_{\frac{n}{\gcd(n,s_i)}}$ for any $I \subseteq \{1,2\}$.
The map $\phi_I$ satisfies \eqref{equation: definition phi}  by the following computation:
\begin{itemize}
\item When $I=\varnothing$, the condition in \eqref{equation: definition phi} is vacuously true.
\item When $I=\{1\}$, we have
\[ s_1 h_{1,\{1\}}=0\cdot  1=3 \quad \text{(mod 3)}.   \]

\item When $I=\{2\}$, we have
\[ s_2 h_{2,\{2\}}=1 \cdot 1=1 \quad \text{(mod 3)}.   \]

\item When $I=\{1,2\}$,we have
\[ s_1 h_{1,\{1,2\}}+s_2h_{2,\{1,2\}} =0\cdot 1+1\cdot 1= 1 \quad \text{(mod 3)}. \tag*{\qedhere} \]
\end{itemize} 
%
%
\end{example}

Recall that $L_{q-1}(f)=\{ z \in \Z_n \mid f(z) =q-1  \}$.
For any  $I\subseteq \{1,\ldots,m\}$, write
\begin{align*}
\Qc_I&:=\left\{  \alpha \in \Qc \  \bigg | \   P_i \text{ divides } \alpha  \text{ iff } i  \notin I\right \};\\
\Ec_I&:=\{ f \in \Ec \ \mid \   L_{q-1}(f) \cap S_i = S_i  \text{ iff } i \notin I \}. \qedhere
\end{align*}
By definition  $\{ \Qc_I \}_{I \subseteq \{1,\ldots,m\}}$ 
and $\{ \Ec_I \}_{I \subseteq \{1,\ldots,m\}}$ form a partition of $\Qc$ and $\Ec$, respectively.

Let  $i \in I$, and let $\alpha$ be any element of $Q_I$.
 We denote by $\phi_{i,I}(\alpha)$  the $i$-th coordinate of $\phi_{I}((a_i(\alpha))_{i \in I})$,
 which corresponds to a nonnegative integer strictly less than  $\frac{n}{\gcd(n,s_i)}$.

Since  $b_i(\alpha)$ is a nonnegative integer  strictly less than $\frac{(q^{\ell_i}-1) \gcd(n,s_i)}{n}$  and $\phi_{i,I}(\alpha)$ is a nonnegative integer strictly less than $\frac{n}{\gcd(n,s_i)}$, we have  
\begin{equation}\label{equation: sum requested by referee} 0\leq  b_{i}(\alpha) \frac{n}{\gcd(n,s_i)} +\phi_{i,I}(\alpha) < q^{\ell_i}-1. \end{equation}
We denote by 
$c_{i,0}(\alpha),\ldots, c_{i,\ell_i-1}(\alpha) \in \{0,\ldots, q-1\}$ the unique  integers that satisfy
\begin{equation}\label{equation: cij}
\sum_{j=0}^{\ell_i-1} c_{i,j}(\alpha) \, q^{j}= b_{i}(\alpha) \frac{n}{\gcd(n,s_i)} +\phi_{i,I}(\alpha). 
\end{equation}
By \eqref{equation: sum requested by referee}, the sequence of integers  $(c_{i,0},\ldots, c_{i,\ell_i-1})$ is well defined
and  is not equal to $(q-1,\ldots,q-1)$.

Let $f_\alpha:\Z_n \to \{0,1, \ldots,q-1\}$  be given by
\begin{equation}\label{definition: function alpha}
f_{\alpha}(q^js_i):= \begin{cases}
q-1 & \text{if } i \notin I;\\
c_{i,j}(\alpha) & \text{if } i \in I.
\end{cases}
\end{equation}
The function  $f_\alpha$ has the property that $L_{q-1}(f_\alpha) \cap S_i$ is a strict subset of $S_i$ for any $i \in I$ since $(c_{i,0},\ldots, c_{i,\ell_i-1})$ is   is not equal to $(q-1,\ldots,q-1)$.
This implies that  $f_{\alpha}$ is contained in $\Ec_I$.

\begin{definition}\label{definition: varphi_I}
Let $I \subseteq \{1,\ldots,m\}$.
We define $\psi_I: \Qc_I \to \Ec_I$ to  be the map that sends $\alpha \in \Qc_I$ to the function $f_\alpha$.
\end{definition}

\begin{example}\label{example: 4}
Continuing from Example~\ref{example: 3}, 
 we present the image of the function $\psi_I$ for different  $\alpha$'s
(recall that we represent a function 
 $f:\Z_3 \to \{0,1\}$ 
as the set  $\{ z \in \Z_3 \mid f(z)=1 \}$):
\begin{itemize}
\item The case $I=\varnothing$:
When $\alpha=0$, the map  $\psi_{\varnothing}$ sends $\alpha$ to  $\{0,1,2  \}$.
\item The case $I=\{1\}$:
When $\alpha=1+X+X^2$, we have
\[ b_{1}(\alpha)  +\phi_{1,\{1\}}(\alpha)=0+0=0=0 \cdot 2^0.   \]
The map $\psi_{\{1\}}$ then sends $\alpha$ to  $\{1,2  \}$.
\item The case $I=\{2\}$:
\begin{itemize}
\item When $\alpha=1+X$, we have
\[ 3 \, b_{2}(\alpha) +\phi_{2,\{2\}}(\alpha)=3\cdot0+2=2=0\cdot 2^0+1 \cdot 2^1.   \]
The map $\psi_{\{2\}}$ then sends $\alpha$ to  $\{0,2 \}$.

\item When $\alpha=X+X^2$, we have
\[ 3\, b_{2}(\alpha)  +\phi_{2,\{2\}}(\alpha)=0\cdot0+0=0=0\cdot 2^0+0\cdot 2^1.   \]
The map $\psi_{\{2\}}$ then sends $\alpha$ to  $\{0 \}$.

\item When $\alpha=1+X^2$, we have
\[ 3\, b_{2}(\alpha) +\phi_{2,\{2\}}(\alpha)=3\cdot 0 +1=1=1 \cdot  2^0+0 \cdot 2^1.   \]
The map $\psi_{\{2\}}$ then sends $\alpha$ to  $\{0,1  \}$.
\end{itemize}

\item
The case $I=\{1,2\}$:
\begin{itemize}
\item 
 When $\alpha=1$, we have
\begin{align*}
 b_{1}(\alpha)  +\phi_{1,\{1,2\}}(\alpha)&=0+0=0=0 \cdot 2^0; \\
 3\, b_{2}(\alpha) +\phi_{2,\{1,2\}}(\alpha)&=3\cdot0+0=0=0 \cdot 2^0+0\cdot 2^1.   
\end{align*}
The map $\psi_{\{1,2\}}$ then sends $\alpha$ to  $\varnothing$.

\item When $\alpha=X$, we have
\begin{align*}
 b_{1}(\alpha)  +\phi_{1,\{1,2\}}(\alpha)&=0+0=0=0 \cdot  2^0; \\
 3\, b_{2}(\alpha)  +\phi_{2,\{1,2\}}(\alpha)&=3\cdot0+1=1=1 \cdot 2^0+0 \cdot 2^1.   
\end{align*}
The map $\psi_{\{1,2\}}$ then sends $\alpha$ to  $\{1\}$.

\item When $\alpha=X^2$, we have
\begin{align*}
 b_{1}(\alpha)  +\phi_{1,\{1,2\}}(\alpha)&=0+0=0=0 \cdot 2^0; \\
 3\, b_{2}(\alpha)  +\phi_{2,\{1,2\}}(\alpha)&=3\cdot 0+2=2=0\cdot 2^0+1\cdot 2^1.   
\end{align*}
The map $\psi_{\{1,2\}}$ then sends $\alpha$ to the function $\{2  \}$. \qedhere
\end{itemize}
\end{itemize}
\end{example}

\begin{lemma}\label{lemma: bijection}
Let $q$ be a prime power,  let $n$ be a positive integer coprime to $q$, and let $I \subseteq \{1,\ldots,m\}$.
Then 
 the map $\psi_I :\Qc_I \to \Ec_I$ is a bijection.
\end{lemma}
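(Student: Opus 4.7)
The plan is to show that $\varphi_I$ is a bijection by exhibiting it as a composition of explicit bijections, tracking the data attached to each $\alpha \in \Qc_I$ at every stage. First I would invoke the Chinese remainder theorem (Theorem~\ref{theorem: Chinese remainder theorem}\eqref{item: CRT2}) to obtain a bijection $\alpha \longleftrightarrow (\alpha_i)_{i \in I} \in \prod_{i \in I} G_i$. Then the discrete logarithm (whose image lies in $\{0, 1, \ldots, q^{\ell_i}-2\}$ by Lemma~\ref{lemma: finite field}\eqref{item: finite field 2}) together with Euclidean division by $\tfrac{(q^{\ell_i}-1)\gcd(n,s_i)}{n}$ identifies each $\alpha_i$ with the pair $(a_i(\alpha), b_i(\alpha))$ living in $\Z_{\frac{n}{\gcd(n,s_i)}} \times \{0, \ldots, \tfrac{(q^{\ell_i}-1)\gcd(n,s_i)}{n}-1\}$. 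So the composite $\alpha \mapsto \bigl((a_i(\alpha), b_i(\alpha))\bigr)_{i \in I}$ is already a bijection out of $\Qc_I$.

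Next I would apply the group automorphism $\phi_I$ to the tuple $(a_i(\alpha))_{i \in I}$, producing another bijection whose image coordinates are the $\phi_{i,I}(\alpha)$. For each $i \in I$, the pairing
\[
\bigl(\phi_{i,I}(\alpha),\, b_i(\alpha)\bigr) \,\longmapsto\, b_i(\alpha)\cdot \tfrac{n}{\gcd(n,s_i)} + \phi_{i,I}(\alpha)
\]
is then a bijection onto $\{0, \ldots, q^{\ell_i}-2\}$: the image lies in this range because $b_i(\alpha) \leq \tfrac{(q^{\ell_i}-1)\gcd(n,s_i)}{n}-1$ and $\phi_{i,I}(\alpha) \leq \tfrac{n}{\gcd(n,s_i)}-1$, while the inverse just extracts $b_i(\alpha)$ as the quotient and $\phi_{i,I}(\alpha)$ as the remainder under division by $\tfrac{n}{\gcd(n,s_i)}$. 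Finally, $\ell_i$-digit base-$q$ expansion is a bijection between $\{0, \ldots, q^{\ell_i}-2\}$ and tuples $(c_{i,0}, \ldots, c_{i,\ell_i-1}) \in \{0,\ldots,q-1\}^{\ell_i}$ other than $(q-1,\ldots,q-1)$.

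Composing all these bijections, together with the forced values $f_\alpha|_{S_i} \equiv q-1$ for $i \notin I$, reproduces precisely the map of Definition~\ref{definition: varphi_I} with codomain exactly $\Ec_I$, so $\varphi_I$ is bijective. I expect the main (and essentially only) non-formal step to be the range verification $b_i(\alpha)\cdot \tfrac{n}{\gcd(n,s_i)} + \phi_{i,I}(\alpha) \leq q^{\ell_i}-2$ in the middle paragraph; this single inequality simultaneously ensures that the base-$q$ digits are legitimate and that the digit tuple on each $S_i$ with $i \in I$ avoids the forbidden value $(q-1,\ldots,q-1)$, so that $f_\alpha$ genuinely lands in $\Ec_I$. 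Every other identification is a standard bijection, so assembling the proof should mostly be a matter of writing these components down in order.
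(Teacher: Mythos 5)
Your proposal is correct and relies on exactly the same ingredients as the paper's proof --- the Chinese remainder theorem, discrete logarithms, Euclidean division by $\tfrac{(q^{\ell_i}-1)\gcd(n,s_i)}{n}$ and by $\tfrac{n}{\gcd(n,s_i)}$, the automorphism $\phi_I$, and base-$q$ digits --- merely repackaged as a chain of explicit bijections rather than the paper's separate injectivity and surjectivity arguments, which amount to walking backward and forward through that same chain. Your key range check $b_i(\alpha)\tfrac{n}{\gcd(n,s_i)}+\phi_{i,I}(\alpha)\le q^{\ell_i}-2$ is precisely the observation the paper records after Definition~\ref{definition: varphi_I}, so this is essentially the same proof.
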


\begin{proof}
Let $\alpha$ and $\alpha'$ be two elements of $\Qc_I$ with the same image under $\psi_I$.
By \eqref{equation: cij}, \eqref{definition: function alpha}, and   the definition of $\psi_I$, we have
\[  b_{i}(\alpha) \frac{n}{\gcd(n,s_i)} +\phi_{i,I}(\alpha)= b_{i}(\alpha') \frac{n}{\gcd(n,s_i)} +\phi_{i,I}(\alpha') \quad \text{ for any $i \in I$}.\]
Since $\phi_{i,I}(\alpha)$ and  $\phi_{i,I}(\alpha')$
are both nonnegative integers strictly less than $\frac{n}{\gcd(n,s_i)}$, and  the equation above then implies that
\[ \phi_{i,I}(\alpha)=\phi_{i,I}(\alpha') \qquad \text{ and } \qquad b_i(\alpha)=b_i(\alpha') \quad \text{ for any $i \in I$}.  \]
Since $\phi_I$ is chosen to be a bijection by Definition~\ref{definition: phi},
we conclude that
\[ a_i(\alpha)=a_i(\alpha') \quad \text{ and } \quad b_i(\alpha)=b_i(\alpha')  \quad \text{ for any $i \in I$}.  \]
It then follows from Definition~\ref{definition: g_i} and Definition~\ref{definition: a_i b_i}
that
\[  \alpha= \alpha' \quad \left( \textnormal{mod } P_i \right) \quad \text{ for any $i \in I$}.   \]
On the other hand, by the definition of $\Qc_I$, we have  
\[ \alpha= 0=\alpha' \quad \left( \textnormal{mod } P_i \right) \quad \text{ for any $i \notin I$}.   \]
By Theorem~\ref{theorem: Chinese remainder theorem}\eqref{item: CRT2}, 
we then conclude that 
$\alpha=\alpha'$.
This proves the injectivity of $\psi_I$.

Let $f$ be an arbitrary element of $\Ec_I$.
For any $i \in I$,
let $b_i$ and $\phi_{i,I}$ be the quotient and the remainder of the division of 
the sum $\sum_{j=0}^{\ell_i-1} q^j  f(q^j s_i)$ by $\frac{n}{\gcd(n,s_i)}$.
The  sum $\sum_{j=0}^{\ell_i-1} q^j  f(q^j s_i)$ is a nonnegative integer strictly  less than $q^{\ell_i}-1$
by the assumption that $L_{q-1}(f)\cap S_i \neq S_i$.
This implies that $b_i$ and $\phi_{i,I}$ satisfy the  inequalities
  $0\leq b_i <\frac{(q^{\ell_i}-1) \gcd(n,s_i)}{n}$ and $0\leq \phi_{i,I} < \frac{n}{\gcd(n,s_i)}$.


Write $(a_i)_{i \in I}:=\phi_I^{-1}((\phi_{i,I})_{i \in I})$.
By Theorem~\ref{theorem: Chinese remainder theorem}\eqref{item: CRT2}
there exists a unique $\alpha \in \Qc$ that satisfies the following equations:
\begin{align*}
  \log_{g_i}(\alpha)&=   { a_i\frac{(q^{\ell_i}-1) \gcd(n,s_i)}{n} +b_i} \qquad (\text{for } i \in I);  \\
  \alpha&= 0 \quad \left( \textnormal{mod } P_i\right) \qquad (\text{for }i\notin I).
\end{align*}
The element $\alpha$ is contained in $\Qc_I$ as $\alpha$ is divisible by $P_i$ if and only if $i\notin I$. 
Furthermore, the map $\psi_I$ maps  $\alpha$ to $f$, as the construction above mirrors the construction of $\psi_I$ with steps taken in the reverse order.
This proves the surjectivity of $\psi_I$.
\end{proof}

\begin{lemma}\label{lemma: function sum}
Let $q$ be a prime power,  let $n$ be a positive integer coprime to $q$, and let $I \subseteq \{1,\ldots,m\}$.
Then, for any $\alpha \in \Qc_I$,
\begin{enumerate}
[{label=\textnormal{({\roman*})}, labelindent=0pt,
ref={\roman*}}]
\item \label{item: subset sum 1} $\displaystyle \sum_{z \in \Z_n} zf_\alpha(z)= \sum_{i \in I} s_i \phi_{i,I}(\alpha) \quad \textnormal{(mod $n$)}$; and
\item \label{item: subset sum 2} There exists unique $\beta\in \{\alpha, X \alpha, \ldots, X^{n-1}\alpha\}$ 
such that  $\psi_I(\beta)$ is contained in $\Fc$.
\end{enumerate}
\end{lemma}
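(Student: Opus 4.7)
The plan for part \eqref{item: subset sum 1} is a direct unpacking. I partition $\Z_n$ into the cyclotomic cosets $S_i$, write each $z \in S_i$ as $q^j s_i$ with $0 \leq j < \ell_i$, and substitute Definition~\ref{definition: varphi_I}. For $i \notin I$ the contribution is $(q-1) \sum_{j=0}^{\ell_i - 1} q^j s_i = (q^{\ell_i} - 1) s_i$, which vanishes modulo $n$ by Definition~\ref{definition: S_i}. For $i \in I$ the contribution is $s_i \sum_{j=0}^{\ell_i-1} q^j c_{i,j} = s_i \bigl( b_i(\alpha) \tfrac{n}{\gcd(n,s_i)} + \phi_{i,I}(\alpha) \bigr)$, and the first term is a multiple of $n$ because $s_i \cdot \tfrac{n}{\gcd(n,s_i)}$ is. What survives is exactly $\sum_{i \in I} s_i \phi_{i,I}(\alpha) \pmod{n}$.

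For part \eqref{item: subset sum 2}, I apply (i) to each $X^k\alpha$ and search for $k$ making the sum vanish modulo $n$. Lemma~\ref{lemma: a_i b_i}\eqref{item: a_i b_i 2} shows that $b_i$ is unchanged under $\alpha \mapsto X^k\alpha$ and $a_i$ is shifted by $k$, so $(a_i(X^k\alpha))_{i \in I} = (a_i(\alpha))_{i \in I} + k(1,\ldots,1)$ in $\prod_{i \in I} \Z_{n/\gcd(n,s_i)}$. Because $\phi_I$ is a group homomorphism, this gives $\phi_{i,I}(X^k\alpha) \equiv \phi_{i,I}(\alpha) + k h_{i,I} \pmod{n/\gcd(n,s_i)}$; multiplying by $s_i$ absorbs the ambiguity into a multiple of $n$, and summing with Definition~\ref{definition: phi} yields
\begin{equation*}
\sum_{i \in I} s_i \phi_{i,I}(X^k \alpha) \equiv \sum_{i \in I} s_i \phi_{i,I}(\alpha) + k d \pmod{n},
\end{equation*}
where $d := \gcd(n, \gcd(s_i)_{i \in I})$.

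Combining with (i), the condition $\varphi_I(X^k\alpha) \in \Fc$ becomes $k d \equiv - \sum_{i \in I} s_i \phi_{i,I}(\alpha) \pmod{n}$. Since $d$ divides each $s_i$ with $i \in I$, the right-hand side is divisible by $d$, so this congruence has exactly $d$ solutions $k \in \{0,\ldots,n-1\}$, forming a single coset of $(n/d)\Z$ in $\Z_n$. On the other hand, the stabilizer computation via Lemma~\ref{lemma: finite field}\eqref{item: finite field 3} and the Chinese remainder theorem (Theorem~\ref{theorem: Chinese remainder theorem}\eqref{item: CRT2}) shows the orbit of $\alpha$ under multiplication by $X$ has size $\lcm_{i \in I}(n/\gcd(n,s_i)) = n/d$. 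Therefore all $d$ solutions collapse to one and the same element $\beta$ of $\{\alpha, X\alpha, \ldots, X^{n-1}\alpha\}$, giving the required uniqueness. The principal obstacle, more bookkeeping than substance, is reconciling modular arithmetic between $\Z_n$ and $\prod_{i \in I} \Z_{n/\gcd(n,s_i)}$, and verifying the lcm-to-gcd identity $\lcm_{i \in I}(n/\gcd(n,s_i)) = n/\gcd(n, \gcd(s_i)_{i \in I})$ so that the solution count lines up exactly with the orbit size.
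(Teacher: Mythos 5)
Your proposal is correct and follows essentially the same route as the paper: part (\ref{item: subset sum 1}) by splitting the weighted sum over cyclotomic cosets and invoking Definition~\ref{definition: varphi_I}, and part (\ref{item: subset sum 2}) by showing via Lemma~\ref{lemma: a_i b_i}\eqref{item: a_i b_i 2} and Definition~\ref{definition: phi} that the sum shifts by $k\gcd(n,\gcd(s_i)_{i\in I})$ under $\alpha\mapsto X^k\alpha$, then matching this against the orbit size $n/\gcd(n,\gcd(s_i)_{i\in I})$. Your phrasing of counting $d$ solutions in $\{0,\ldots,n-1\}$ that collapse to one orbit element is just a cosmetic variant of the paper's choice of the unique $k\in\{0,\ldots,\frac{n}{\gcd(n,\gcd(s_i)_{i\in I})}-1\}$.
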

\begin{proof}
We start with proving part~\eqref{item: a_i b_i 1}.
 We have
\begin{align*}
\sum_{z \in \Z_n} z f_\alpha(z)=& \sum_{i \in I} \sum_{j=0}^{\ell_i-1} q^{j}  s_i c_{i,j}(\alpha)+ \sum_{i \notin I}  \sum_{j=0}^{\ell_i-1}  q^j s_i (q-1) \qquad \text{(by \eqref{definition: function alpha})}
\\
=& \sum_{i \in I} \sum_{j=0}^{\ell_i-1} q^{j}  s_i  c_{i,j}(\alpha)+ \sum_{i \notin I} (q^{\ell_i}-1)s_i\\
=&\sum_{i \in I} \sum_{j=0}^{\ell_i-1} q^{j}  s_i c_{i,j}(\alpha)  \quad \text{(mod $n$)}  \qquad \text{(by Definition~\ref{definition: S_i})}\\
=&\sum_{i \in I}   s_i \left( b_{i}(\alpha) \frac{n}{\gcd(n,s_i)} +\phi_{i,I}(\alpha) \right) \quad \text{(mod $n$)} \qquad \text{(by \eqref{equation: cij})}\\
=&\sum_{i \in I} s_i \phi_{i,I}(\alpha) \quad \text{(mod $n$)}.
\end{align*}
This proves part~\eqref{item: a_i b_i 1}.

We now prove part \eqref{item: a_i b_i 2}.
We have
\begin{align*}
|\{\alpha, X \alpha, \ldots, X^{n-1}\alpha\}|=\min\{k > 0 \mid X^{k}\alpha=\alpha  \}
=\lcm(1,(\text{order of $X$ in $G_i$})_{i \in I}),
\end{align*}
where the last equality is a consequence of Theorem~\ref{theorem: Chinese remainder theorem}\eqref{item: CRT2} and the assumption that $\alpha \in \Qc_I$.
By Lemma~\ref{lemma: finite field}\eqref{item: finite field 3},
we have
\[ \lcm(1,(\text{order of $X$ in $G_i$})_{i \in I})=\lcm\left(1, \left(\frac{n}{\gcd(n,s_i)}\right)_{i \in I}\right)=\frac{n}{\gcd(n, \gcd(s_i)_{i \in I})}.   \]
Combining the two equations above, we get
\[|\{\alpha, X \alpha, \ldots, X^{n-1}\alpha\}|=\frac{n}{\gcd(n, \gcd(s_i)_{i \in I})}.  \]
Hence it suffices to show that there exists a unique  $k \in \{0, \ldots, \frac{n}{\gcd(n, \gcd(s_i)_{i \in I})}-1\}$
for  which $\psi_I(X^k\alpha)$ is contained in $\Fc$, or equivalently,
\[  \sum_{z \in \Z_n} z f_{X^k\alpha}(z) = 0 \quad (\text{mod }n). \]

By Lemma~\ref{lemma: a_i b_i}\eqref{item: a_i b_i 2}, we have,  for any $k\geq 0$,
\begin{align*}
\phi_{I}((a_i(X^k\alpha))_{i \in I})=\phi_I((k+ a_i(\alpha))_{i \in I}).
\end{align*}
It then follows from the definition of $h_{i,I}$ and $\phi_{i,I}$ that, for any $i \in I$,
\begin{equation}\label{equation: detailed}
\phi_{i,I}(X^k\alpha)= k\,h_{i,I}+\phi_{i,I}(\alpha).
\end{equation}
We then have,  for any $k\geq 0$,
\begin{align}\label{equation: function sum}
\begin{split}
&\sum_{z \in \Z_n} z f_{X^k\alpha}(z)=\sum_{i \in I}s_i \phi_{i,I}(X^k\alpha) \quad \text{(mod $n$)} \qquad \text{(by part \eqref{item: subset sum 1})}\\
=&\sum_{i \in I}  s_i  \left( k\, h_{i,I} + \phi_{i,I}(\alpha) \right)  \quad \text{(mod $n$)} \qquad \text{(by \eqref{equation: detailed})}\\
=& k \sum_{i \in I}  s_i   h_{i,I} + \sum_{i \in I}s_i\phi_{i,I}(\alpha)  \quad \text{(mod $n$)}\\
=&k \gcd(n,\gcd(s_i)_{i\in I}) + \sum_{i \in I}  s_i   \phi_{i,I}(\alpha)   \quad \text{(mod $n$)} \qquad \text{(by  \eqref{equation: definition phi})}.
\end{split}
\end{align}
By the definition of $\gcd$, the sum  $\sum_{i \in I}  s_i   \phi_{i,I}(\alpha)$ is a multiple of 
$\gcd(n,\gcd(s_i)_{i\in I})$ modulo $n$.
Hence there  exists a unique $k\in \{0,1, \ldots, \frac{n}{\gcd(n, \gcd(s_i)_{i \in I})}-1\}$
for which the sum in \eqref{equation: function sum} is equal to $0$.
This completes the proof.
%
%
%
%
%
%
\end{proof}

\begin{definition}\label{definition: bijection}
Let $\widehat{\psi}:\Nc \to \Fc$
be the map defined by
\[ \{\alpha, X\alpha,  \ldots, X^{n-1}\alpha\} \mapsto \psi_{I}(\beta), \]
where $I$ is the subset of $\{1,\ldots,m\}$ such that $\alpha \in \Qc_I$, and $\beta$ is the unique element of  $\{\alpha, X \alpha,  \ldots, X^{n-1}\alpha\}$
for which its image is contained in $\Fc$. 
\end{definition}

\begin{proof}[Proof of Theorem~\ref{theorem: bijection}]
Note that  the maps $\psi_I$ $(I \subseteq \{1,\ldots,m\})$ satisfy \eqref{item: C1} and \eqref{item: C2} by 
Lemma~\ref{lemma: bijection} and Lemma~\ref{lemma: function sum}\eqref{item: subset sum 2}, respectively.
It then follows  that  the map $\widehat{\psi}$ in Definition~\ref{definition: bijection} is a bijection.
\end{proof}

\begin{example}\label{example: 5}
Continuing from Example~\ref{example: 4},
the map $\widehat{\psi}: \Nc \to \Fc$ is  given by (recall that we represent a function 
 $f:\Z_3 \to \{0,1\}$ 
as the set  $\{ z \in \Z_3 \mid f(z)=1 \}$):
\begin{itemize}
\item $\{0\}$ is being mapped to $\psi_{\varnothing}(0)=\{0,1,2\}$;

\item  $\{ 1+X+X^2  \}$ is being mapped to  $\psi_{\{1\}}(1+X+X^2)= \{1,2  \}$;
\item $\{ 1+X,X+X^2, 1+X^2  \}$ is being mapped to  $\psi_{\{2\}}(X+X^2)= \{0 \}$;
\item 
$\{ 1,X, X^2  \}$ is being mapped to  $\psi_{\{1,2\}}(1)= \varnothing$. \qedhere
\end{itemize}
\end{example}


\section{Some open bijective problems}\label{section: unsolved questions}
We conclude with two bijective problems that refine Theorem~\ref{theorem: combinatorial} and Theorem~\ref{theorem: bijection}.

\begin{enumerate}
\item Construct  a bijection between  $\Nc$ and  $\Fc$ for any  two coprime positive integers $q$ and $n$.
Note that the bijection   in Theorem~\ref{theorem: bijection}
relies on viewing the color for neckaces in $\Nc$
as being drawn from the finite field $\mathbb{F}_q$,
and thus fails to work when $q$ is not a prime power.

\item Let $n$ be an odd positive integer, and let  $k \in \{0,\ldots,n\}$.
Give a bijective proof that these two sets have the same cardinality:
\begin{itemize}
\item The set $\Nc_k$ of necklaces of length $n$ with $k$ black beads and $n-k$ white beads; and
\item The set $\Fc_k$ of functions $f:\Z_n\to \{0,1\}$ such that the sum $\sum_{z \in \Z_n} z f(z)$ is equal to $0$ modulo $n$
and the set $\{z \in \Z_n \mid  f(z)\neq 0  \}$ has cardinality $k$.
\end{itemize}

One can show that  $\Nc_k$ and $\Fc_k$ have the same cardinality
by computing  $|\Nc_k|$  and $|\Fc_k|$ separately.
The cardinality of $\Nc_k$ was computed by \cite[Theorem~1.20]{ACH15} by using the orbit-counting theorem,
and the cardinality of $\Fc_k$ can be computed by using the counting method developed in~\cite{KP93}.
The same bijective problem was asked in \cite{ACH15} for the case that $k$ divides $n$.

We remark that the bijection in 
Theorem~\ref{theorem: bijection} does not map $\Nc_k$ to $\Fc_k$, as can be seen from Example~\ref{example: 5}.

\end{enumerate}

\section*{Acknowledgement}
The author would like to thank Richard Stanley for sharing his knowledge on the status of the problem; Marcelo Aguiar, Henk D.L. Hollmann,  and Lionel Levine for their invaluable advice and encouragement;  
Lila Greco, Viktor Kiss, Jos\'e Bastidas Olaya, Connor Simpson, Karl Thomas B{\aa}{\aa}th Sj\"oblom,
and Lilla T\'othm\'er\'esz for constructive criticism of the paper;
and the anonymous referee for their careful reading and insightful comments.

\bibliographystyle{alpha}
\bibliography{hyperplane}
\end{document}